\numberwithin{equation}{section}
\newtheorem{thrm}{Theorem}[section]
\newtheorem{lemma}[thrm]{Lemma}
\newtheorem{prop}[thrm]{Proposition}
\newtheorem{cor}[thrm]{Corollary}
\newtheorem{rmrk}[thrm]{Remark}
\def\frh{{\frak h}}
\begin{document}

\begin{abstract}
We construct explicit compact solutions with non-zero field
strength, non-flat instanton and constant dilaton to the heterotic
string equations in dimensions seven and eight. We present a
quadratic condition on the curvature which is necessary and
sufficient the heterotic supersymmetry and the anomaly cancellation
to imply the heterotic equations of motion in dimensions seven and
eight. We show that some of our examples are compact supersymmetric
solutions of the heterotic equations of motion in dimensions seven
and eight.
\end{abstract}

\title[ Supersymmetric solutions of the heterotic
equations of motion ] {Compact supersymmetric solutions of the
heterotic equations of motion in dimensions 7 and 8}
\date{\today}

\author{Marisa Fern\'andez}
\address[Fern\'andez]{Universidad del Pa\'{\i}s Vasco\\
Facultad de Ciencia y Tecnolog\'{\i}a, Departamento de Matem\'aticas\\
Apartado 644, 48080 Bilbao\\ Spain} \email{marisa.fernandez@ehu.es}

\author{Stefan Ivanov}
\address[Ivanov]{University of Sofia "St. Kl. Ohridski"\\
Faculty of Mathematics and Informatics\\
Blvd. James Bourchier 5\\
1164 Sofia, Bulgaria} \email{ivanovsp@fmi.uni-sofia.bg}

\author{Luis Ugarte}
\address[Ugarte]{Departamento de Matem\'aticas\,-\,I.U.M.A.\\
Universidad de Zaragoza\\
Campus Plaza San Francisco\\
50009 Zaragoza, Spain} \email{ugarte@unizar.es}

\author{Raquel Villacampa}
\address[Villacampa]{Departamento de Matem\'aticas\,-\,I.U.M.A.\\
Universidad de Zaragoza\\
Campus Plaza San Francisco\\
50009 Zaragoza, Spain} \email{raquelvg@unizar.es}

\maketitle

\setcounter{tocdepth}{2} \tableofcontents

\section{Introduction}

The bosonic fields of the ten-dimensional supergravity which arises
as low energy effective theory of the heterotic string are the
spacetime metric $g$, the NS three-form field strength $H$, the
dilaton $\phi$ and the gauge connection $A$ with curvature $F^A$.
The bosonic geometry considered in this paper is of the form
$R^{1,9-d}\times M^d$ where the bosonic fields are non-trivial only
on $M^d$, $d\leq 8$. We consider the two connections
\begin{equation*}
\nabla^{\pm}=\nabla^g \pm \frac12 H,
\end{equation*}
where $\nabla^g$ is the Levi-Civita connection of the Riemannian
metric $g$. Both connections preserve the metric, $\nabla^{\pm}g=0$, and
have totally skew-symmetric torsion $\pm H$, respectively.


The Green-Schwarz anomaly cancellation mechanism requires that the
three-form Bianchi identity receives an $\alpha'$ correction of the
form
\begin{equation}\label{acgen}
dH=\frac{\alpha'}4(p_1(M^p)-p_1(E))= 2\pi^2\alpha'\Big(Tr(R\wedge
R)-Tr(F^A\wedge F^A)\Big),
\end{equation}
where $p_1(M^p), p_1(E)$ are the first Pontrjagin forms of $M^p$
with respect to a connection $\nabla$ with curvature $R$, and that
of the vector bundle $E$ with connection $A$, respectively.

A class of heterotic-string backgrounds for which the Bianchi
identity of the three-form $H$ receives a correction of type
\eqref{acgen} are those with (2,0) world-volume supersymmetry. Such
models were considered in \cite{HuW}. The target-space geometry of
(2,0)-supersymmetric sigma models has been extensively investigated
in \cite{HuW,Str,HP1}. Recently, there is revived interest in these
models \cite{GKMW,CCDLMZ,GMPW,GMW,GPap} as string backgrounds and in
connection to heterotic-string compactifications with fluxes
\cite{Car1,BBDG,BBE,BBDP,y1,y2,y3,y4}.

In writing \eqref{acgen} there is a subtlety to the choice of
connection $\nabla$ on $M^p$ since anomalies can be cancelled
independently of the choice \cite{Hull}. Different connections
correspond to different regularization schemes in the
two-dimensional worldsheet non-linear sigma model. Hence the
background fields given for the particular choice of $\nabla$ must
be related to those for a different choice by a field redefinition
\cite{Sen}. Connections on $M^p$ proposed to investigate the anomaly
cancellation  \eqref{acgen} are $\nabla^g$ \cite{Str,GMW},
$\nabla^+$ \cite{CCDLMZ}, $\nabla^-$ \cite{Berg,Car1,GPap,II}, Chern
connection $\nabla^c$ when $d=6$ \cite{Str,y1,y2,y3,y4}.

A heterotic geometry will preserve supersymmetry if and only if, in
10 dimensions, there exists at least one Majorana-Weyl spinor
$\epsilon$ such that the supersymmetry variations of the fermionic
fields vanish, i.e. the following Killing-spinor equations hold
\cite{Str}
\begin{gather} \nonumber
\delta_{\lambda}=\nabla_m\epsilon = \left(\nabla_m^g
+\frac{1}{4}H_{mnp}\Gamma^{np} \right)\epsilon=\nabla^+\epsilon=0,
\\\label{sup1} \delta_{\Psi}=\left(\Gamma^m\partial_m\phi
-\frac{1}{12}H_{mnp}\Gamma^{mnp} \right)\epsilon=(d\phi-\frac12H)\cdot\epsilon=0, \\
\nonumber
\delta_{\xi}=F^A_{mn}\Gamma^{mn}\epsilon=F^A\cdot\epsilon=0,
\end{gather}
where  $\lambda, \Psi, \xi$ are the gravitino, the dilatino and the
gaugino fields, respectively, and $\cdot$ means Clifford action of
forms on spinors.

The bosonic part of the ten-dimensional supergravity action in the
string frame is \cite{Berg}
\begin{gather}\label{action}
S=\frac{1}{2k^2}\int
d^{10}x\sqrt{-g}e^{-2\phi}\Big[Scal^g+4(\nabla^g\phi)^2-\frac{1}{2}|H|^2
-\frac{\alpha'}4\Big(Tr |F^A|^2)-Tr |R|^2\Big)\Big].
\end{gather}

The string frame field equations (the equations of motion induced
from the action \eqref{action}) of the heterotic string up to
two-loops \cite{HT} in sigma model perturbation theory are (we use
the notations in \cite{GPap})
\begin{gather}\nonumber
Ric^g_{ij}-\frac14H_{imn}H_j^{mn}+2\nabla^g_i\nabla^g_j\phi-\frac{\alpha'}4
\Big[(F^A)_{imab}(F^A)_j^{mab}-R_{imnq}R_j^{mnq}\Big]=0,\\\label{mot}
\nabla^g_i(e^{-2\phi}H^i_{jk})=0,\\\nonumber
\nabla^+_i(e^{-2\phi}(F^A)^i_j)=0.
\end{gather}
The field equation of the dilaton $\phi$ is implied from the first
two equations above.

We search for a solution to lowest nontrivial order in $\alpha'$ of
the equations of motion in dimensions seven and eight that follow
from the bosonic action which also preserves at least one
supersymmetry.

It is known \cite{Bwit,GMPW} (\cite{GPap} for dimension 6) that the
equations of motion of type I supergravity \eqref{mot} with $R=0$
are automatically satisfied if one imposes, in addition to the
preserving supersymmetry equations \eqref{sup1}, the three-form
Bianchi identity \eqref{acgen} taken with respect to a flat
connection $(R=0)$ on $TM$.

A lot of effort had been done in dimension six and compact torsional
solutions for the heterotic/type I string are known to exist
\cite{DRS,BBDG,BBE,CCDLMZ,GMW,y1,y2,y3,y4,DFG,FIUV}.

In dimension five compact supersymmetric solutions to the heterotic
equations of motion with non-zero fluxes and constant dilaton have
been constructed recently in \cite{FIUV2}.

In dimensions 7 and 8 the only known heterotic/type I  solutions
with non-zero fluxes to the equations of motion preserving at least
one supersymmetry (satisfying \eqref{sup1} and \eqref{acgen} without
the curvature term, $R=0$) are those constructed in \cite{FNu,FN,HS}
for dimension 8, and those presented in \cite{GNic} for dimension 7.
All these solutions are noncompact and conformal to a flat space.
Noncompact solutions to \eqref{sup1} and \eqref{acgen} in dimensions
7 and 8 are presented also in \cite{II}.

The main goal of this paper is to construct explicit compact
supersymmetric valid solutions with non-zero field strength,
non-flat instanton and constant dilaton to the heterotic equations
of motion \eqref{mot} in dimensions 7 and 8.

According to no-go (vanishing) theorems  (a consequence of the
equations of motion \cite{FGW,Bwit}; a consequence of the
supersymmetry \cite{IP1,IP2} for SU($n$)-case and \cite{GMW} for the
general case) there are no compact solutions with non-zero flux and
non-constant dilaton satisfying simultaneously the supersymmetry
equations \eqref{sup1} and the three-form  Bianchi identity
\eqref{acgen} if one takes flat connection on $TM$, more precisely a
connection with zero first Pontrjagin 4-form, $Tr(R\wedge R)=0$.

In the compact case one necessarily has to have a non-zero term
$Tr(R\wedge R)$. However, under the presence of a non-zero curvature
4-form $Tr(R\wedge R)$ the solution of the supersymmetry equations
\eqref{sup1} and the anomaly cancellation condition \eqref{acgen}
obeys the second and the third equations of motion (the second and
the third equations in \eqref{mot}) but does not always satisfy the
Einstein equation of motion (the first equation in \eqref{mot}). We
give in Theorem \ref{thac} a quadratic expression for $R$ which is
necessary and sufficient condition in order that \eqref{sup1} and
\eqref{acgen} imply \eqref{mot} in dimensions 7 and 8 based on the
properties of the special geometric structure induced from the first
two equations in \eqref{sup1}. (A similar condition in dimensions
five and six we presented in \cite{FIUV2,FIUV}, respectively.) In
particular, if $R$ is a $G_2$-instanton (resp. $Spin(7)$-instanton)
the supersymmetry equations together with the anomaly cancellation
imply the equations of motion. The latter can also be seen following
the considerations in the Appendix of \cite{GMPW}.

In this article we present compact nilmanifolds in  dimensions seven
and eight satisfying the heterotic supersymmetry equations
\eqref{sup1} with non-zero flux $H$, non-flat instanton  and
constant dilaton obeying the three-form Bianchi identity
\eqref{acgen} with curvature term  $R=R^+$ which is of instanton
type. According to Theorem \ref{thac} these  nilmanifolds are
compact supersymmetric solutions of the heterotic equations of
motion \eqref{mot} in dimensions 7 and 8. The solutions in
dimension~7 are constructed on the $7$-dimensional generalized
Heisenberg nilmanifold, which is a circle bundle over a $6$-torus
with curvature inside the Lie algebra $su(3)$. The $8$-dimensional
compact solutions can be described as a circle bundle over the
product of a $2$-torus by the total space of a circle bundle over a
$4$-torus, or alternatively as the total space of a circle bundle
with curvature inside the Lie algebra $g_2$ over a $7$-manifold
which is a circle bundle over a $6$-torus (see Section
\ref{examples} for details). Based on the examples we present in
Section~\ref{examples} as well as on constructions proposed in
\cite{GMW}, we outline in the last section a more
general construction of compact manifolds solving the first two
equations in \eqref{sup1} with non-constant dilaton depending on reduced number of variables.

Our solutions seem to be the first explicit compact valid
supersymmetric heterotic solutions with non-zero flux, non-flat
instanton and constant dilaton in dimensions 7 and 8 satisfying the
equations of motion \eqref{mot}.

{\bf Our conventions:} We  rise and lower the indices with the
metric and use the summation convention on repeated indices. For
example, $$B_{ijk}C^{ijk}=B_i^{jk}C^i_{jk}=B_{ijk}C_{ijk}\sum_{i,j,k=1}^d B_{ijk}C_{ijk}.$$

The connection 1-forms $\sigma_{ji}$ of a metric connection $\nabla,
\nabla g=0$, with respect to a local basis $\{E_1,\ldots,E_d\}$ are
given by
$$
\sigma_{ji}(E_k) = g(\nabla_{E_k}E_j,E_i),
$$
since we write $\nabla_X E_j = \sigma^s_j(X)\, E_s$.

The curvature 2-forms $\Omega^i_j$ of $\nabla$ are given in terms of
the connection 1-forms $\sigma^i_j$ by
\begin{equation}\label{curvature}
\Omega^i_j = d \sigma^i_j + \sigma^i_k\wedge\sigma^k_j, \quad
\Omega_{ji} = d \sigma_{ji} + \sigma_{ki}\wedge\sigma_{jk}, \quad
R^l_{ijk}=\Omega^l_k(E_i,E_j), \quad R_{ijkl}=R^s_{ijk}g_{ls},
\end{equation}
and the first Pontrjagin class is represented by the 4-form
$$
p_1(\nabla)={1\over 8\pi^2} \sum_{1\leq i<j\leq d}
\Omega^i_j\wedge\Omega^i_j.
$$

\section{General properties of $G_2$ and $Spin(7)$ structures}

We recall the basic properties of the geometric structures induced
from the gravitino and dilatino Killing spinor equations (the first
two equations in \eqref{sup1}) in dimensions 7 and 8.

\medskip
\noindent {\bf $G_2$-structures in $d=7$.} Endow ${\mathbb R}^7$
with its standard orientation and inner product. Let
$\{E_1,\ldots,E_7\}$ be an oriented orthonormal basis and
$\{e^1,\dots,e^7\}$ its dual basis. Consider the three-form $\Theta$
on ${\mathbb R}^7$ given by
\begin{equation}
  \Theta =e^{127} - e^{236} + e^{347}+ e^{567} - e^{146} - e^{245} +
  e^{135}.\label{11}
\end{equation}
The subgroup of $GL(7,\mathbb{R})$ fixing $\Theta$ is the
exceptional Lie group $G_2$.  It is a compact, connected,
simply-connected, simple Lie subgroup of $SO(7)$ of dimension
14~\cite{Br}. The Lie algebra is denoted by $\frak{g}_2$, and it is
isomorphic to the two-forms satisfying 7 linear equations, namely
$\frak{g}_2\cong \Lambda_{14}^2({\mathbb R}^7) =\{\beta\in
\Lambda^2({\mathbb R}^7) \vert *(\beta\wedge\Theta) =- \beta\}$. The
3-form $\Theta$ corresponds to a real spinor $\epsilon$ and
therefore, $G_2$ can be identified as the isotropy group of a
non-trivial real spinor.

The Hodge star operator supplies the 4-form $*\Theta$ given by
\begin{equation}
  *\Theta =  e^{3456} + e^{1457} + e^{1256}+ e^{1234} + e^{2357} +
  e^{1367} - e^{2467}.\label{12}
\end{equation}
The space $\Lambda^2_{14}(\mathbb{R}^7)$ can also be described as
the subspace of 2-forms $\beta$ which annihilate $*\Theta$, i.e.
$\beta\wedge*\Theta=0$. A $7$-dimensional Riemannian manifold $M$ is
called a $G_2$-manifold if its structure group reduces to the
exceptional Lie group $G_2$. The existence of a $G_2$-structure is
equivalent to the existence of a global non-degenerate three-form
which can be locally written as \eqref{11}. The 3-form $\Theta$ is
called \emph{the fundamental form} of the $G_2$-manifold \cite{Bo}.
From the purely topological point of view, a $7$-dimensional
paracompact manifold is a $G_2$-manifold if and only if it is an
oriented spin manifold~\cite{LM}. We will say that the pair
$(M,\Theta)$ is a $G_2$-manifold with \emph{$G_2$-structure}
(determined by) \emph{$\Theta$}.

The fundamental form of a $G_2$-manifold determines a Riemannian
metric \emph{implicitly} through
$g_{ij}=\frac16\sum_{kl}\Theta_{ikl}\Theta_{jkl}$ \cite{Gr}. This is
referred to as the metric induced by $\Theta$.

In~\cite{FG}, Fern\'andez and Gray divide $G_2$-manifolds into 16
classes according to how the covariant derivative of the fundamental
three-form behaves with respect to its decomposition into $G_2$
irreducible components (see also~\cite{CS,GKMW}).  If the
fundamental form is parallel with respect to the Levi-Civita
connection, $\nabla^g\Theta=0$, then the Riemannian holonomy group
is contained in $G_2$. In this case the induced metric on the
$G_2$-manifold is Ricci-flat, a fact first observed by
Bonan~\cite{Bo}. It was shown by Gray~\cite{Gr} (see
also~\cite{FG,Br,Sal}) that a $G_2$-manifold is parallel precisely
when the fundamental form is harmonic, i.e. $d\Theta=d*\Theta=0$.
The first examples of complete parallel $G_2$-manifolds were
constructed by Bryant and Salamon~\cite{BS,Gibb}. Compact examples
of parallel $G_2$-manifolds were obtained first by
Joyce~\cite{J1,J2,J3} and recently by Kovalev~\cite{Kov}.

The Lee form $\theta^7$ is defined by \cite{Cabr}
\begin{equation}\label{g2li}
\theta^7=-\frac{1}{3}*(* d\Theta\wedge\Theta) = \frac{1}{3}*(*
d*\Theta\wedge*\Theta).
\end{equation}
If the Lee form vanishes, $\theta^7=0$, then the $G_2$-structure is
said to be \emph{balanced}. If the Lee form is closed,
$d\theta^7=0$, then the $G_2$-structure is locally conformally
equivalent to a balanced one \cite{FI1}. If the $G_2$-structure
satisfies the condition $d*\Theta=\theta^7\wedge *\Theta$ then it is
called \emph{integrable} and an analog of the Dolbeault cohomology
is investigated in \cite{FUg}. A \emph{cocalibrated} $G_2$-structure
is a balanced $G_2$-structure which is also integrable.

\medskip
\noindent {\bf $Spin(7)$-structures in $d=8$.} Consider ${\mathbb
R}^8$ endowed with an orientation and its standard inner product.
Let $\{E_1,\ldots,E_8\}$ be an oriented orthonormal basis and
$\{e^1,\dots,e^8\}$ its dual basis. Consider the 4-form $\Phi$ on
${\mathbb R}^8$ given by
\begin{eqnarray}\label{s1}
\Phi = \!\!\!& \!\!&  \!\!\! e^{1238} - e^{1347} + e^{1458}+
e^{1678} - e^{1257} - e^{1356} + e^{1246}
 \\ \nonumber &+ \!\!&  \!\!\!
e^{4567} + e^{2568} + e^{2367}+ e^{2345} + e^{3468} + e^{2478} -
e^{3578}.\nonumber
\end{eqnarray}
The 4-form  $\Phi$ is self-dual $*\Phi=\Phi$ and the 8-form
$\Phi\wedge\Phi$ coincides with the volume form of ${\mathbb R}^8$.
The subgroup of $GL(8,\mathbb{R})$ which fixes $\Phi$ is isomorphic
to the double covering $Spin(7)$ of $SO(7)$~\cite{HL}. Moreover,
$Spin(7)$ is a compact simply-connected Lie group of dimension
21~\cite{Br}. The Lie algebra of $Spin(7)$ is denoted by
$\frak{spin}(7)$ and it is isomorphic to the two-forms satisfying 7
linear equations, namely $\frak{spin}(7)\cong \{\beta \in
\Lambda^2(\mathbb{R}^8)|*(\beta\wedge\Phi)=-\beta\}$. The 4-form
$\Phi$ corresponds to a real spinor $\phi$ and therefore, $Spin(7)$
can be identified as the isotropy group of a non-trivial real
spinor.

A \emph{$Spin(7)$-structure} on an 8-manifold $M$ is by definition a
reduction of the structure group of the tangent bundle to $Spin(7)$;
we shall also say that $M$ is a \emph{$Spin(7)$-manifold}. This can
be described geometrically by saying that there exists a nowhere
vanishing global differential 4-form~$\Phi$ on $M$ which can be
locally written as (\ref{s1}). The 4-form $\Phi$ is called the
\emph{fundamental form} of the $Spin(7)$-manifold $M$ \cite{Bo}.

The fundamental form of a $Spin(7)$-manifold determines a Riemannian
metric \emph{implicitly} through
$g_{ij}=\frac{1}{24}\sum_{klm}\Phi_{iklm}\Phi_{jklm}$ \cite{Gr}.
This is referred to as the metric induced by $\Phi$.

In general, not every 8-dimensional Riemannian spin manifold $M$
admits a $Spin(7)$-structure. We explain the precise condition
\cite{LM}. Denote by $p_2(M), {\mathbb X}(M), {\mathbb X}(S_{\pm})$ the
second Pontrjagin class, the Euler characteristic of $M$ and the
Euler characteristic of the positive and the negative spinor
bundles, respectively. It is well known \cite{LM} that a spin
8-manifold admits a $Spin(7)$-structure if and only if ${\mathbb
X}(S_+)=0$ or ${\mathbb X}(S_-)=0$. The latter conditions are
equivalent to $ p_1^2(M)-4\,p_2(M)+ 8\,{\mathbb X}(M)=0$, for an
appropriate choice of the orientation \cite{LM}.

Let us recall that a $Spin(7)$-manifold $(M,g,\Phi)$ is said to be
parallel (torsion-free \cite{J2}) if the holonomy of the metric
$Hol(g)$ is a subgroup of $Spin(7)$. This is equivalent to saying
that the fundamental form $\Phi$ is parallel with respect to the
Levi-Civita connection $\nabla^g$ of the metric $g$. Moreover,
$Hol(g)\subset Spin(7)$ if and only if $d\Phi=0$ \cite{F} (see also
\cite{Br,Sal}) and any parallel $Spin(7)$-manifold is Ricci flat
\cite{Bo}. The first known explicit example of complete parallel
$Spin(7)$-manifold with $Hol(g)=Spin(7)$ was constructed by Bryant
and Salamon \cite{BS,Gibb}. The first compact examples of parallel
$Spin(7)$-manifolds with $Hol(g)=Spin(7)$ were constructed by
Joyce\cite{J1,J2}.

There are 4-classes of $Spin(7)$-manifolds according to the
Fern\'andez classification \cite{F} obtained as irreducible
representations of $Spin(7)$ of the space $\nabla^g\Phi$.

The Lee form $\theta^8$ is defined by \cite{C1}
\begin{equation}\label{c2}
\theta^8 -\frac{1}{7}*(*d\Phi\wedge\Phi)=\frac{1}{7}*(\delta\Phi\wedge \Phi).
\end{equation}
The 4 classes of Fern\'andez classification can be described in
terms of the Lee form as follows \cite{C1}: $W_0 : d\Phi=0; \quad
W_1 : \theta^8 =0; \quad W_2 : d\Phi = \theta^8\wedge\Phi; \quad W :
W=W_1\oplus W_2.$

A $Spin(7)$-structure of the class $W_1$ (i.e. $Spin(7)$-structure
with zero Lee form) is called a \emph{balanced} $Spin(7)$-structure.
If the Lee form is closed, $d\theta^8=0$, then the
$Spin(7)$-structure is locally conformally equivalent to a balanced
one \cite{I1}. It is shown in \cite{C1} that the Lee form of a
$Spin(7)$-structure in the class $W_2$ is closed and therefore such
a manifold is locally conformally equivalent to a parallel
$Spin(7)$-manifold. The compact spaces with closed but not exact Lee
form (i.e. the structure is not globally conformally parallel) have
different topology than the parallel ones \cite{I1}.

Coeffective cohomology and coeffective numbers of Riemannian
manifolds with $Spin(7)$-structure are studied in \cite{Ug}.

\section{The supersymmetry equations}

Geometrically, the vanishing of the gravitino variation is
equivalent to the existence of a non-trivial real spinor parallel
with respect to the metric connection $\nabla^+$ with totally
skew-symmetric torsion $T=H$. The presence of $\nabla^+$-parallel
spinor leads to restriction of the holonomy group $Hol(\nabla^+)$ of
the torsion connection $\nabla^+$. Namely, $Hol(\nabla^+)$ has to be
contained in $SU(3), d=6$
\cite{Str,IP1,IP2,GMW,GIP,CCDLMZ,BBDG,BBE}, the exceptional group
$G_2, d=7$ \cite{FI,GKMW,GMW,FI1}, the Lie group $Spin(7), d=8$
\cite{GKMW,I1,GMW}. A detailed analysis of the induced  geometries
is carried out in \cite{GMW} and all possible geometries (including
non compact stabilizers) are investigated in \cite{GLP,GPRS,GPR,P}.

\subsubsection*{{\bf Dimension $d=7$.}}
The precise conditions to have a solution to the gravitino Killing
spinor equation in dimension 7 were found in \cite{FI}. Namely,
there exists a non-trivial parallel spinor with respect to a
$G_2$-connection with torsion 3-form $T$ if and only if there exists
an integrable $G_2$-structure $(\Theta,g)$, i.e.
$d*\Theta=\theta^7\wedge *\Theta$. In this case, the torsion
connection $\nabla^+$ is unique and the torsion 3-form $T$ is given
by
$$H=T=\frac{1}{6}(d\Theta,*\Theta)\,\Theta - *d\Theta
+*(\theta^7\wedge\Theta).$$
The Riemannian scalar curvature is
\cite{FI1} (\cite{Br1} for the general case)
$s^g=\frac{1}{18}(d\Theta,*\Theta)+||\theta^7||^2 -
\frac{1}{12}||T||^2 + 3\, \delta\theta^7.
$

The necessary conditions to have a solution to the system of
dilatino and gravitino Killing spinor equations were derived in
\cite{GKMW,FI,FI1}, and the sufficiency was proved in \cite{FI,FI1}.
The general existence result \cite{FI,FI1} states that there exists
a non-trivial solution to both dilatino and gravitino Killing spinor
equations in dimension 7 if and only if there exists a
$G_2$-structure $(\Theta,g)$  satisfying the equations
\begin{equation}\label{sol7}
d*\Theta=\theta^7\wedge *\Theta, \quad d\Theta\wedge\Theta=0, \quad
\theta^7=2d\phi.
\end{equation}
Consequently, the torsion 3-form (the flux $H$) is given by
\begin{equation}\label{tsol7}
H=T= -* d\Theta + 2*(d\phi\wedge\Theta).
\end{equation}
The Riemannian scalar curvature satisfies $s^g=8||d\phi||^2
-\frac{1}{12}||T||^2 +6\, \delta d\phi$.

The equations  \eqref{sol7} hold exactly when the $G_2$-structure
$(\bar\Theta=e^{-\frac32\phi}\Theta,\bar g=e^{-\phi}g)$ obeys the
equations
$$d\bar{*}\bar\Theta=d\bar\Theta\wedge\bar\Theta=0,$$ i.e. it is cocalibrated of pure
type.

\subsubsection*{{\bf Dimension $d=8$.}}
It is shown in \cite{I1} that the gravitino Killing spinor equation always has a
solution in dimension 8. Namely, any $Spin(7)$-structure admits a
unique $Spin(7)$-connection with totally skew-symmetric torsion $T$
satisfying
$$T=* d\Phi-\frac76*(\theta^8\wedge\Phi).$$
(In fact, the converse is also true, namely if there are no obstructions to exist
a solution to the gravitino Killing spinor equation then dimension is 8
and the structure is $Spin(7)$ \cite{F3,Nag}.)

The necessary conditions to have a solution to the system of
dilatino and gravitino Killing spinor equations were derived in
\cite{GKMW,I1}, and the sufficiency was proved in \cite{I1}. The
general existence result \cite{I1} states that there exists a
non-trivial solution to both dilatino and gravitino Killing spinor
equations in dimension~8 if and only if there exists a
$Spin(7)$-structure $(\Phi,g)$ with an exact Lee form which is
equivalent to the statement that the $Spin(7)$-structure is
conformally balanced, i.e. the $Spin(7)$ structure
$(\bar\Phi=e^{-\frac{12}7\phi}\Phi,\bar g=e^{-\frac67\phi}g)$
satisfies $\bar{*}d\bar\Phi\wedge\bar\Phi=0$.

The torsion 3-form (the flux~$H$) and the Lee form are given by
\begin{equation}\label{tsol8}
H=T=*d\Phi- 2*(d\phi\wedge\Phi), \qquad \theta^8=\frac{12}{7}d\phi.
\end{equation}
The Riemannian scalar curvature satisfies $s^g=8||d\phi||^2
-\frac{1}{12}||T||^2 + 6\,\delta d\phi$.

In addition to these equations, the vanishing of the gaugino
variation requires the 2-form $F^A$ to be of instanton type
\cite{CDev,Str,HS,RC,DT,GMW}:

\noindent{\bf Case $d=7$:} a $G_2$-instanton, i.e. the gauge field
$A$ is a $G_2$-connection and its curvature 2-form~$F^A\in
\frak{g}_2$. The latter can be expressed in any of the following two
equivalent ways
\begin{equation}\label{7inst}
F^A_{mn}\Theta^{mn}\hspace{0mm}_p=0 \quad \Leftrightarrow \quad
F^A_{mn}= -\frac{1}{2}F^A_{pq}(*\Theta)^{pq}\hspace{0mm}_{mn};
\end{equation}

\noindent{\bf Case $d=8$:} an $Spin(7)$-instanton, i.e. the gauge
field $A$ is a $Spin(7)$-connection and its curvature 2-form $F^A\in
\frak{spin}(7)$. The latter is equivalent to
\begin{equation}\label{8inst}
F^A_{mn}=-\frac{1}{2}F^A_{pq}\Phi^{pq}\hspace{0mm}_{mn}.
\end{equation}

\section{Heterotic supersymmetry and equations of motion}

It is known \cite{Bwit,GMPW} (\cite{GPap} for dimension 6) that the
equations of motion of type I supergravity \eqref{mot} with $R=0$
are automatically satisfied if one imposes, in addition to the
preserving supersymmetry equations \eqref{sup1}, the three-form
Bianchi identity \eqref{acgen} taken with respect to a flat
connection on $TM, R=0$. However, the no-go theorem
\cite{FGW,Bwit,IP1,IP2,GMW} states that if even $Tr(R\wedge R)=0$
there are no compact solutions with non-zero flux $H$ and
non-constant dilaton.

In the presence of a curvature term $Tr(R\wedge R)\not=0$, a
solution of the supersymmetry equations \eqref{sup1} and the anomaly
cancellation condition \eqref{acgen} obeys the second and the third
equations in \eqref{mot} but does not always satisfy the Einstein
equation of motion (the first equation in \eqref{mot}). However if
the curvature $R$ is of instanton type \eqref{sup1} and
\eqref{acgen} imply \eqref{mot}. This can be seen following the
considerations in the Appendix of \cite{GMPW}. We shall give below
an independent proof for the Einstein equation of motion (the first
equation in \eqref{mot}) based on the properties of the special
geometric structure induced from the first two equations in
\eqref{sup1}.

A consequence of the gravitino and dilatino Killing spinor equations is  an
expression of the Ricci tensor $Ric^+_{mn}=R^+_{imnj}g^{ij}$ of the
(+)- connection, and therefore an expression of the Ricci tensor
$Ric^g$ of the Levi-Civita connection, in terms of the suitable
trace of the torsion three-form $T=H$ (the Lee form) and the
exterior derivative of the torsion form $dT=dH$ (see \cite{FI} in
dimension 7 and \cite{I1} in dimension 8). We outline  an
unified proof for dimensions 7 and 8.

Indeed,
the two Ricci tensors are connected by (see e.g. \cite{FI})
\begin{gather}\label{ricg+}
Ric^g_{mn}=Ric^+_{mn}+\frac14T_{mpq}T_n^{pq}-\frac12\nabla^+_sT^s_{mn},
\qquad
Ric^+_{mn}-Ric^+_{nm}=\nabla^+_sT^s_{mn}=\nabla^g_sT^s_{mn},\\\label{mo}
Ric^g_{mn}=\frac12(Ric^+_{mn}+Ric^+_{nm})+\frac14T_{mpq}T_n^{pq}.
\end{gather}

Denote by $\Psi$ the 4-form $-*\Theta$ in dimension 7 or the
$Spin(7)$-form $-\Phi$ in dimension 8. Since $Hol(\nabla^+)\subset
\{
\frak{g}_2, \frak{spin}(7)\}$,  we have the next sequence of
identities
\begin{equation}\label{ric+}
2Ric^+_{mn}=R^+_{mjkl}\Psi_{jkln}=\frac13(R^+_{mjkl}+R^+_{mklj}+
R^+_{mljk})\Psi_{jkln}.
\end{equation}
We apply the following identity established in \cite{FI}
\begin{equation}\label{ric++}
R^+_{jklm}+R^+_{kljm}+R^+_{ljkm}-R^+_{mjkl}-R^+_{mklj}-
R^+_{mljk}\frac32dT_{jklm}-T_{jks}T_{lms} -T_{kls}T_{jms}-T_{ljs}T_{kms}.
\end{equation}
The first Bianchi identity for $\nabla^+$ reads (see e.g.\cite{FI})
\begin{equation}\label{bi+}
R^+_{jklm}+R^+_{kljm}+R^+_{ljkm}=dT_{jklm}-T_{jks}T_{lms}
-T_{kls}T_{jms}-T_{ljs}T_{kms}+\nabla^+_mT_{jkl}.
\end{equation}
Now, \eqref{bi+}, \eqref{ric++} and \eqref{ric+} yield
\begin{equation}\label{ric+f}
Ric^+_{mn}=\frac1{12}dT_{mjkl}\Psi_{jkln}+\frac16\nabla^+_mT_{jkl}\Psi_{jkln}.
\end{equation}
Using the special expression of the torsion \eqref{tsol7} and
\eqref{tsol8} for dimension 7 and 8, respectively,  the equation
\eqref{ric+f} takes the form
\begin{equation}\label{ric+ff}
Ric^+_{mn}=\frac1{12}dT_{mjkl}\Psi_{jkln}-2\nabla^+_md\phi_n\frac1{12}dT_{mjkl}
\Psi_{jkln}-2\nabla^g_md\phi_n+d\phi_sT^s_{mn}.
\end{equation}
Substitute \eqref{ric+ff} into \eqref{mo}, insert the result into
the first equation of \eqref{mot} and use the anomaly cancellation
\eqref{acgen} to conclude
\begin{thrm}\label{thac}
The Einstein equation of motion (the first equation in \eqref{mot})
in dimensions 7 and 8 is a consequence of the heterotic
Killing spinor equations \eqref{sup1} and the anomaly cancellation
\eqref{acgen} if and only if the next identity holds
\begin{equation}\label{supmot}
\frac1{6}\Big[R_{mjab}R_{klab}+R_{mkab}R_{ljab}+R_{mlab}R_{jkab}\Big]\Psi_{jkln}
=R_{mpqr}R_n^{pqr},
\end{equation}
where  the 4-form $\Psi$ is equal  to $-*\Theta$ in dimension 7 and to the
$Spin(7)$-form $-\Phi$ in dimension 8.

In particular, if $R$ is an instanton then  \eqref{supmot} holds.
\end{thrm}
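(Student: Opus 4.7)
The plan is to substitute the explicit formula \eqref{ric+ff} for $Ric^+$ into \eqref{mo}, track the cancellations against the remaining terms of the Einstein equation in \eqref{mot}, and finally invoke the anomaly cancellation \eqref{acgen} together with the instanton condition on $F^A$.

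First I would symmetrize \eqref{ric+ff}. Since $T^s_{mn}$ is antisymmetric in $(m,n)$, the term $d\phi_s T^s_{mn}$ drops out upon symmetrization, while $\nabla^g_m\nabla^g_n\phi$ is already symmetric, so \eqref{mo} becomes
\begin{equation*}
Ric^g_{mn}=\tfrac{1}{24}\bigl(dT_{mjkl}\Psi_{jkln}+dT_{njkl}\Psi_{jklm}\bigr)-2\nabla^g_m\nabla^g_n\phi+\tfrac{1}{4}T_{mpq}T_n{}^{pq}.
\end{equation*}
Substituting this into the first line of \eqref{mot} and using $H=T$, the torsion-squared term cancels $-\tfrac{1}{4}H_{mpq}H_n{}^{pq}$ and the dilaton Hessian cancels $+2\nabla^g_m\nabla^g_n\phi$. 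The Einstein equation therefore reduces to
\begin{equation*}
\tfrac{1}{24}\bigl(dT_{mjkl}\Psi_{jkln}+dT_{njkl}\Psi_{jklm}\bigr)=\tfrac{\alpha'}{4}\Bigl[(F^A)_{mpab}(F^A)_n{}^{pab}-R_{mpab}R_n{}^{pab}\Bigr].
\end{equation*}

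Next I would use \eqref{acgen} to replace $dT$ by the curvature-squared combination $2\pi^2\alpha'\bigl(\mathrm{Tr}(R\wedge R)-\mathrm{Tr}(F^A\wedge F^A)\bigr)$. Expanding the 4-form $\mathrm{Tr}(R\wedge R)$ into the cyclic sum $R_{mj,ab}R_{kl,ab}+R_{mk,ab}R_{lj,ab}+R_{ml,ab}R_{jk,ab}$ (and similarly for $F^A$) produces exactly the triple contraction appearing in \eqref{supmot}. For the $F^A$ piece, the gaugino equation forces $F^A$ to be an instanton, and the algebraic identity $F^A_{pq}\Psi^{pq}{}_{\ell n}=2F^A_{\ell n}$ (immediate from \eqref{7inst} or \eqref{8inst} together with $\Psi=-{*}\Theta$ in $d=7$ and $\Psi=-\Phi$ in $d=8$) collapses the $F^A$-triple contraction with $\Psi$ back to a multiple of $(F^A)_{mpab}(F^A)_n{}^{pab}$. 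With the correct normalization these $F^A$ pieces cancel, and what survives is precisely \eqref{supmot}. The ``in particular'' clause follows by applying the same Lie-algebra identity $R_{pq,ab}\Psi^{pq}{}_{\ell n}=2R_{\ell n,ab}$, valid in the first pair of indices whenever $R$ is a $G_2$- or $Spin(7)$-instanton, to each of the three summands on the left-hand side of \eqref{supmot}.

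The main obstacle I foresee is the numerical bookkeeping: the factors $\tfrac{1}{24}$ from the symmetrized Ricci formula, $2\pi^2\alpha'$ from \eqref{acgen}, $\tfrac{\alpha'}{4}$ in \eqref{mot} and the factor $2$ from the instanton identity must combine so that the $F^A$ contributions cancel cleanly and the coefficient $\tfrac{1}{6}$ in \eqref{supmot} emerges. A subtler point is that the left-hand side of the reduced Einstein equation carries both orderings $dT_{mjkl}\Psi_{jkln}$ and $dT_{njkl}\Psi_{jklm}$, whereas \eqref{supmot} is organized as a single cyclic sum symmetric in $(m,n)$; matching these requires the $G_2$- or $Spin(7)$-invariance of $\Psi$ together with the first Bianchi identity \eqref{bi+} for $\nabla^+$ to absorb the asymmetric parts into the triple contraction.
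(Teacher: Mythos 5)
Your proposal is correct and follows essentially the same route as the paper's own (very terse) proof: substitute the symmetrized \eqref{ric+ff} into \eqref{mo}, insert into the first equation of \eqref{mot} so that the $\frac14 T_{mpq}T_n{}^{pq}$ and dilaton Hessian terms cancel, and then use \eqref{acgen} together with the gaugino instanton identity to collapse the $F^A$ contribution, leaving exactly \eqref{supmot}. The only difference is that you spell out the $F^A$-cancellation and the cyclic-sum contraction with $\Psi$ explicitly, which the paper leaves implicit.
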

It is shown in \cite{I2} that the curvature of $R^+$ satisfies the
identity $R^+_{ijkl}=R^+_{klij}$ if and only if $\nabla^+_iT_{jkl}$
is a four-form. Now Theorem \ref{thac} yields

\begin{cor}\label{thacp}
Suppose the torsion 3-form is $\nabla^+$-parallel,
$\nabla^+_iT_{jkl}=0$. The equations of motion \eqref{mot} with
respect to  the curvature $R^+$ of the (+)-connection are
consequences of the heterotic Killing spinor equations \eqref{sup1}
and  the anomaly cancellation \eqref{acgen}.
\end{cor}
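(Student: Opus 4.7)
The strategy is to reduce the corollary to the ``in particular'' clause of Theorem \ref{thac} by verifying that, under the hypothesis $\nabla^+_i T_{jkl}=0$, the curvature $R^+$ is itself a $G_2$-instanton in dimension $7$ and a $Spin(7)$-instanton in dimension $8$. Once this is established, the identity \eqref{supmot} with $R=R^+$ follows from Theorem \ref{thac}, and this delivers the Einstein equation of motion; the remaining two equations of \eqref{mot} are already implied by the supersymmetry equations \eqref{sup1} together with the anomaly cancellation \eqref{acgen}, as recalled at the beginning of the present section.

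My first step would be to invoke the characterization proved in \cite{I2}, quoted just above the statement of the corollary: the pair symmetry $R^+_{ijkl}=R^+_{klij}$ holds if and only if $\nabla^+_i T_{jkl}$ is a four-form. Under the hypothesis $\nabla^+T=0$ this tensor is trivially totally skew in its four indices, so the pair symmetry of $R^+$ is immediate.

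Next, the gravitino Killing spinor equation forces $Hol(\nabla^+)\subset G_2$ in dimension $7$ and $Hol(\nabla^+)\subset Spin(7)$ in dimension $8$, so the curvature endomorphism $R^+(X,Y)$ takes values in $\frak{g}_2$, respectively $\frak{spin}(7)$. Written in components, this says that for every fixed pair $(i,j)$ the 2-form $R^+_{ij,\cdot\,\cdot}$ satisfies the instanton identity \eqref{7inst} (or \eqref{8inst}) in its last pair of indices. Combining this with the pair symmetry from the previous step transfers the anti-self-duality relation to the first pair as well:
\begin{equation*}
R^+_{mn,ij}=-\tfrac12\,(*\Theta)^{pq}{}_{mn}\,R^+_{pq,ij}\qquad\text{(with $\Phi$ in place of $*\Theta$ when $d=8$)},
\end{equation*}
so $R^+$ is a genuine $G_2$-- or $Spin(7)$-instanton in the sense required. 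The last sentence of Theorem \ref{thac} then guarantees that \eqref{supmot} holds with $R=R^+$, and the corollary follows.

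The only place where I would exercise care is the sign- and index-bookkeeping when transferring ``$R^+(X,Y)\in\frak{g}_2$ (or $\frak{spin}(7)$) in the last pair'' into the instanton identity \eqref{7inst}--\eqref{8inst} on the first pair via the symmetry $R^+_{ijkl}=R^+_{klij}$; apart from this routine check, the argument is a clean combination of the result of \cite{I2}, the holonomy reduction supplied by the gravitino equation, and the instanton clause of Theorem \ref{thac}.
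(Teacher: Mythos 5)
Your proposal is correct and follows essentially the same route the paper intends: the hypothesis $\nabla^+T=0$ makes $\nabla^+_iT_{jkl}$ trivially a four-form, so the result of \cite{I2} gives the pair symmetry $R^+_{ijkl}=R^+_{klij}$, which together with the holonomy reduction $Hol(\nabla^+)\subset G_2$ (resp. $Spin(7)$) from the gravitino equation makes $R^+$ an instanton, and the ``in particular'' clause of Theorem \ref{thac} then yields the Einstein equation, the other two equations of motion being already consequences of \eqref{sup1} and \eqref{acgen}. This is precisely the argument the paper compresses into the remark quoting \cite{I2} followed by ``Now Theorem \ref{thac} yields''.
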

Manifolds with parallel torsion 3-form are studied in detail in
dimension 6 \cite{Scho} and dimension~7~\cite{Fr7}.

\subsection{Heterotic supersymmetric equations of motion with constant
dilaton}\label{cdil}

In the case when the dilaton is constant we arrive to the following
problems:

\subsubsection*{{\bf Dimension 7}} We look for a compact
$G_2$-manifold $(M,\Theta)$ which satisfies the following conditions
\begin{enumerate}
\item[a).] Gravitino and dilatino Killing spinor  equations with constant dilaton:
Search for a cocalibrated $G_2$-manifold of pure type, i.e.
$d*\Theta=d\Theta\wedge\Theta=0$.
\item[b).] Gaugino equation: look for a  vector bundle
$E$ of rank $r$ over $M$ equiped with a $G_2$-instanton, i.e. a
connection $A$ with curvature 2-form $\Omega^A$ satisfying
\begin{equation}\label{27}
(\Omega^A)_{E_i,E_j}(E_k,E_l)(*\Theta)({E_m,E_n,E_k,E_l})=-2(\Omega^A)_{E_i,E_j}(E_m,E_n),
\end{equation}
where $\{E_1,\ldots,E_7\}$ is a $G_2$-adapted basis on $M$.
\item[c).] Anomaly cancellation condition:
\begin{equation}\label{ac7}
dH=dT=-d*d\Theta= 2\pi^2\alpha' \Big(p_1(M)-p_1(A)\Big), \qquad
\alpha'>0.
\end{equation}
\item[d).] The first Pontrjagin form $p_1(M)$ satisfies equation
\eqref{supmot}.
\end{enumerate}

\subsubsection*{{\bf Dimension 8}} We look for a compact
$Spin(7)$-manifold $(M,\Phi)$ satisfying the following conditions
\begin{enumerate}
\item[a).] Gravitino and dilatino Killing spinor equations with constant dilaton: $(M,\Phi)$
is  balanced, i.e. $*d\Phi\wedge\Phi=0$.
\item[b).] Gaugino equation: look for a  vector bundle
$E$ of rank $r$ over $M$ equiped with an $Spin(7)$-instanton, i.e. a
connection $A$ with curvature 2-form $\Omega^A$ satisfying
\begin{equation}\label{28}
(\Omega^A)_{E_i,E_j}(E_k,E_l)(\Phi)({E_m,E_n,E_k,E_l})=-2(\Omega^A)_{E_i,E_j}(E_m,E_n),
\end{equation}
where $\{E_1,\ldots,E_8\}$ is a $Spin(7)$-adapted basis on $M$.
\item[c).] Anomaly cancellation condition:
\begin{equation}\label{ac8}
dH=dT=d*d\Phi= 2\pi^2 \alpha' \Big(p_1(M)-p_1(A)\Big), \qquad
\alpha'>0.
\end{equation}
\item[d).] The first Pontrjagin form $p_1(M)$ satisfies equation
\eqref{supmot}.
\end{enumerate}

\section{The Lie group setup}\label{examples}

Let us suppose that $g$ is a left invariant Riemannian metric on a
Lie group $G$ of dimension $m$, and let $\{e^1,\ldots,e^m\}$ be an
orthonormal basis of left invariant 1-forms, so that $g=e^1\otimes
e^1 + \cdots +e^m\otimes e^m$. Let
$$
d e^k = \sum_{1\leq i<j \leq m} a_{ij}^k \, e^i\wedge e^j,\quad\quad
k=1,\ldots,m
$$
be the structure equations in the basis $\{e^k\}$.

Let us denote by
$\{E_1,\ldots,E_m\}$ the dual basis. Since $d e^k(E_i,E_j)-e^k([E_i,E_j])$, the Levi-Civita connection 1-forms
$(\sigma^g)^i_j$ are
\begin{equation}\label{lciv}
(\sigma^g)^i_j(E_k) = -\frac12(g(E_i,[E_j,E_k]) - g(E_k,[E_i,E_j]) +
g(E_j,[E_k,E_i]))=\frac12(a^i_{jk}-a^k_{ij}+a^j_{ki}).
\end{equation}

The connection 1-forms $(\sigma^+)^i_j$ for the torsion connection
$\nabla^+$ are given by
\begin{equation}\label{pm}
(\sigma^+)^i_j(E_k)=(\sigma^g)^i_j(E_k) - \frac12 T^i_j(E_k), \qquad
T^i_j(E_k)=T(E_i,E_j,E_k).
\end{equation}

We shall focus on
7 and 8-dimensional nilmanifolds $M=\Gamma\backslash G$ endowed with
an invariant special structure.

\subsection{Explicit solutions in dimension 7}

We consider cocalibrated $G_2$-structures of pure type. From
\eqref{tsol7} we have that the torsion 3-form in this case is given
by
\begin{equation}\label{nabla}
\nabla^+=\nabla^g+\frac12\, T, \quad\quad H=T=-*d\Theta.
\end{equation}
Starting from a balanced SU(3)-structure $(F,\Psi_+,\Psi_-)$ on a
manifold $M^6$ it is easy to see that the $G_2$-structure given by
$\Theta=F\wedge e^7 +\Psi_+$ on the product $M^7=M^6\times S^1$ is
cocalibrated of pure type, where $e^7$ denotes the standard 1-form
on the circle $S^1$. Moreover, following the argument given in
\cite[Theorem 4.6]{II} we conclude that the natural extension of a
SU(3)-instanton on $M^6$ gives rise to a $G_2$-instanton on $M^7$,
and if the torsion connection of the SU(3)-structure satisfies the
modified Bianchi identity then the corresponding $\nabla^+$ given in
\eqref{nabla} also satisfies~\eqref{ac7}. We can apply this to the
compact 6-dimensional explicit solutions given in~\cite{FIUV} to get
compact solutions in dimension~7:

\begin{cor}\label{product-sol-dim7}
Let $(M^6,F,\Psi_+,\Psi_-)$ be a compact balanced SU(3)-nilmanifold
with an SU(3)-instanton solving the modified Bianchi identity for
$\nabla=\nabla^+$ or $\nabla^g$. Then, the $G_2$-manifold
$M^7=M^6\times S^1$ with the structure $\Theta= F\wedge e^7
+\Psi_+$, the $G_2$-instanton obtained as an extension of the
SU(3)-instanton and $\nabla$ being the Levi-Civita connection
$\nabla^g$ or the torsion connection $\nabla^+$ given in
\eqref{nabla}, provides a compact valid solution to the
supersymmetry equations in dimension~7.
\end{cor}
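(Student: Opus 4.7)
The plan is to verify directly, using the product structure $M^7=M^6\times S^1$, that the three conditions (a)--(c) of Subsection~\ref{cdil} in dimension~7 follow from their $6$-dimensional counterparts; compactness of $M^7$ is immediate from that of $M^6$.

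First I would check that $\Theta=F\wedge e^7+\Psi_+$ is cocalibrated of pure type, i.e.\ $d*\Theta=0$ and $d\Theta\wedge\Theta=0$, which by \eqref{sol7} is precisely the gravitino--dilatino content with constant dilaton. Using the SU(3) identities $*_6 F=\tfrac{1}{2}F\wedge F$, $*_6\Psi_+=\Psi_-$, together with $*_7(\alpha\wedge e^7)=(-1)^k *_6\alpha$ and $*_7\alpha=*_6\alpha\wedge e^7$ for a $k$-form $\alpha$ on $M^6$, one finds
\begin{equation*}
*_7\Theta=\tfrac{1}{2}F\wedge F+\Psi_-\wedge e^7.
\end{equation*}
Then $d*_7\Theta=dF\wedge F+d\Psi_-\wedge e^7$, which vanishes by the balanced condition $dF\wedge F=0$ and the SU(3) closure $d\Psi_-=0$. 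Since $d\Psi_+=0$ we also have $d\Theta=dF\wedge e^7$, and hence $d\Theta\wedge\Theta=\pm (dF\wedge\Psi_+)\wedge e^7$; integrability of $J$, which is part of the balanced Strominger data on $M^6$, forces $dF\in\Lambda^{2,1}\oplus\Lambda^{1,2}$ while $\Psi_+\in\Lambda^{3,0}\oplus\Lambda^{0,3}$, so $dF\wedge\Psi_+$ has no $\Lambda^{3,3}$ component and vanishes in six real dimensions.

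Next I would show that the trivial extension of the SU(3)-instanton $A$ is a $G_2$-instanton by using the characterization $\Lambda^2_{14}\cong\frak{g}_2=\{\beta\in\Lambda^2:\beta\wedge *\Theta=0\}$. With the formula for $*_7\Theta$ above,
\begin{equation*}
F^A\wedge *_7\Theta=\tfrac{1}{2}F^A\wedge F\wedge F+F^A\wedge\Psi_-\wedge e^7,
\end{equation*}
and both summands vanish by the defining SU(3)-instanton properties $F^A\wedge F^2=0$ (primitivity) and $F^A\wedge\Psi_\pm=0$ (the $(1,1)$-type condition). For the anomaly cancellation \eqref{ac7}, the flux lifts as $H_{M^7}=-*_7 d\Theta=-*_7(dF\wedge e^7)=*_6 dF$, a $3$-form independent of $e^7$. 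Consequently the torsion connection on $M^7$ splits as the direct sum of the $6$-dimensional $\nabla^+$ and the flat connection on the $S^1$-factor, so $p_1(\nabla^+_{M^7})=\pi^*p_1(\nabla^+_{M^6})$ under the projection $\pi\colon M^7\to M^6$, and likewise for the extended gauge connection. Thus \eqref{ac7} is the pullback of the $6$-dimensional modified Bianchi identity, and the identical argument works with $\nabla^g$ in place of $\nabla^+$.

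The only technical point, rather than a genuine obstacle, is the careful bookkeeping of Hodge-star signs on the product $M^6\times S^1$; once the formula for $*_7\Theta$ is in hand, every subsequent verification is an immediate translation of the corresponding $6$-dimensional statement. In fact, the argument closely parallels \cite[Theorem~4.6]{II}, which may be invoked directly to obtain both the $G_2$-instanton extension and the lifting of the Bianchi identity.
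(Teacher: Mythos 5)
Your proposal is correct and follows essentially the same route as the paper: the paper simply notes that $\Theta=F\wedge e^7+\Psi_+$ is "easy to see" to be cocalibrated of pure type on the product (using $*_7\Theta=\tfrac12 F\wedge F+\Psi_-\wedge e^7$ and the balanced conditions $dF\wedge F=d\Psi_\pm=0$, $dF\wedge\Psi_+=0$), and invokes \cite[Theorem 4.6]{II} for the instanton extension and the lifting of the modified Bianchi identity, exactly as you do. You have merely written out the computations the paper leaves implicit, so no further comment is needed.
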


Our goal next is to find more compact $G_2$-solutions to the
supersymmetry equations with non-zero flux and constant dilaton on
non-trivial extensions of the balanced Hermitian structures on the
Lie algebra $\frh_3$ given in \cite{FIUV}. We also provide a new
solution to the equations of motion based on the 7-dimensional
generalized Heisenberg compact nilmanifold.

\bigskip

\noindent{\bf Seven-dimensional extensions of $\frh_3$:} For any
$t\not=0$, the structure equations
\begin{equation}\label{family-h3}
\left\{
  \begin{aligned}
  &d e^1= d e^2= d e^3= d e^4= d e^5 =0, \\
  &d e^6= -2t\, e^{12} + 2t\, e^{34},
  \end{aligned}
\right.
\end{equation}
correspond to the nilpotent Lie algebra $\frh_3=(0,0,0,0,0,12+34)$.
As it is showed in \cite{FIUV}, the SU(3)-structure given by
$$
F=e^{12}+e^{34}+e^{56},\quad\quad \Psi=\Psi_+ + i\,\Psi_-= (e^1+i\,
e^2)(e^3+i\, e^4)(e^5+i\, e^6),
$$
is balanced for all the values of the parameter $t$. Consider any
nilpotent 7-dimensional extension $\frh^7=\frh_3\oplus\langle E_7
\rangle$ such that the $G_2$-structure
\begin{equation}\label{G2-struct}
\Theta=F\wedge e^7 +\Psi_+
\end{equation}
is cocalibrated
of pure type on $\frh^7$, where $e^7$ denotes the dual of $E_7$.
Using \eqref{11} and \eqref{12} it is easy to check that
$d e^7c_0(e^{12} - e^{34}) + c_1(e^{13} + e^{24}) + c_2 (e^{14} -
e^{23})$, where $c_0,c_1,c_2\in \mathbb{R}$. Since $t\not=0$ in
\eqref{family-h3}, we can consider $c_0=0$. Therefore, the nilpotent
Lie algebra $\frh^7$ must be given by the structure equations
\begin{equation}\label{7-example}
\left\{
  \begin{aligned}
  &d e^1= d e^2= d e^3= d e^4= d e^5 =0, \\
  &d e^6= -2t(e^{12} - e^{34}), \\
  &d e^7= c_1(e^{13} + e^{24}) + c_2 (e^{14} - e^{23}), \\
  \end{aligned}
\right.
\end{equation}
where $c_1,c_2\in \mathbb{R}$. Moreover, a direct calculation shows
that the torsion is given by
$$
T=-*d\Theta = -2t(e^{12} - e^{34})e^6 +c_1(e^{13} + e^{24})e^7 +
c_2(e^{14} - e^{23})e^7.
$$
Hence, $dT=-2(4t^2+c_1^2+c_2^2)e^{1234}$. It is easy to prove that
$T$ is parallel with respect to the torsion connection $\nabla^+$ if
and only if $c_1=c_2=0$, which corresponds to the situation
described in Theorem~\ref{product-sol-dim7}.

From (\ref{curvature}), (\ref{lciv}) and (\ref{pm}), it follows that
the non-zero curvature forms $(\Omega^+)^i_j$ of the torsion
connection $\nabla^+$ are
$$
\begin{array}{l}
(\Omega^+)^1_2=- (\Omega^+)^3_4 = -4t^2(e^{12}-e^{34}),\\[8pt]
(\Omega^+)^1_3= (\Omega^+)^2_4 = -c_1^2(e^{13}+e^{24}) - c_1
c_2(e^{14}-e^{23}) + 4 t c_2\, e^{67} ,\\[8pt]
(\Omega^+)^1_4= -(\Omega^+)^2_3 = -c_1 c_2(e^{13}+e^{24}) - c_2^2
(e^{14}-e^{23}) - 4 t c_1\, e^{67},
\end{array}
$$
which implies that the first Pontrjagin form of $\nabla^+$ is
$$
p_1(\nabla^+)=  -\frac{1}{2\pi^2} \left(16 t^4 + (c_1^2 + c_2^2)^2
\right) e^{1234}.
$$

Let us consider $(c_1,c_2)\in \mathbb{Q}^2-\{(0,0)\}$. The
well-known Malcev theorem asserts that the simply-connected
nilpotent Lie group $H^7$ corresponding to the Lie algebra $\frh^7$
has a lattice $\Gamma$ of maximal rank. We denote by $M^7$ the
compact nilmanifold $\Gamma\backslash H^7$.

\begin{lemma}\label{G2-instanton}
Let $A_\lambda$ be the linear connection preserving the metric on
$M^7$ defined by the connection forms
$$
(\sigma^{A_\lambda})^i_j = \lambda\, e^7,\quad\quad
(\sigma^{A_\lambda})^6_7 = e^1 + e^2 + e^3 + e^4 + e^5 + \lambda\,
e^6 + \lambda\, e^7,
$$
for
$(i,j)=(1,2),(1,3),(1,4),(1,5),(2,3),(2,4),(2,5),(3,4),(3,5),(4,5)$,
where $\lambda\in \mathbb{R}$. Then, $A_\lambda$ is a
$G_2$-instanton with respect to the structure \eqref{G2-struct}, and
$$p_1(A_\lambda)= -\frac{\lambda^2}{4 \pi^2} \left( 4t^2+11(c_1^2 + c_2^2)
\right) e^{1234}.$$
\end{lemma}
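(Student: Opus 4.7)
The plan is to compute the curvature 2-forms $\Omega^{A_\lambda}$ directly from \eqref{curvature}, verify the instanton condition \eqref{7inst}, and then sum $\Omega^i_j\wedge\Omega^i_j$ to read off $p_1(A_\lambda)$.

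For the curvatures, observe first that $\sigma^i_6=\sigma^i_7=0$ for every $i\in\{1,\ldots,5\}$, because by antisymmetry $\sigma^i_k=-\sigma^k_i$ and the pairs $(k,i)=(6,i),(7,i)$ with $i\le 5$ do not appear in the specification of $A_\lambda$. Consequently, for each of the ten listed pairs $(i,j)$ the quadratic term $\sigma^i_k\wedge\sigma^k_j$ in \eqref{curvature} reduces to a sum of $e^7\wedge e^7=0$ contributions, so together with \eqref{7-example} one gets
$$\Omega^i_j=\lambda\,de^7=\lambda\bigl[c_1(e^{13}+e^{24})+c_2(e^{14}-e^{23})\bigr].$$
For $(i,j)=(6,7)$ the same vanishing of $\sigma^6_k$ and $\sigma^7_k$ for $k\le 5$ kills the quadratic term, leaving $\Omega^6_7=d\sigma^6_7=\lambda(de^6+de^7)=\lambda\bigl[-2t(e^{12}-e^{34})+c_1(e^{13}+e^{24})+c_2(e^{14}-e^{23})\bigr]$. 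All remaining curvature forms vanish.

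Now abbreviate $\alpha_1=e^{12}-e^{34}$, $\alpha_2=e^{13}+e^{24}$, $\alpha_3=e^{14}-e^{23}$. The instanton condition \eqref{7inst} reduces to the three checks $\alpha_a\wedge *\Theta=0$ for $a=1,2,3$, each a short wedge computation against the explicit form \eqref{12} of $*\Theta$. I would regard this mechanical sign-bookkeeping as the main obstacle, but granted it, every curvature $\Omega^i_j$ above is an $\mathbb{R}$-linear combination of $\alpha_1,\alpha_2,\alpha_3$, and hence $A_\lambda$ is a $G_2$-instanton.

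For the Pontrjagin form, note that any two monomials taken from distinct $\alpha_a$'s share a common index in $\{1,2,3,4\}$, so $\alpha_a\wedge\alpha_b=0$ for $a\neq b$, while a one-line expansion shows $\alpha_a\wedge\alpha_a=-2\,e^{1234}$ for each $a$. Thus each of the ten identical $\Omega^i_j$ contributes $-2\lambda^2(c_1^2+c_2^2)\,e^{1234}$, and $\Omega^6_7\wedge\Omega^6_7=-2\lambda^2(4t^2+c_1^2+c_2^2)\,e^{1234}$. Summing and dividing by $8\pi^2$ gives exactly $p_1(A_\lambda)=-\frac{\lambda^2}{4\pi^2}\bigl(4t^2+11(c_1^2+c_2^2)\bigr)\,e^{1234}$, as claimed.
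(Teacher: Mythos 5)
Your proposal is correct and follows essentially the same route as the paper: compute the curvature forms directly (the quadratic terms vanish for the reasons you give, leaving $\Omega^i_j=\lambda\,de^7$ for the ten pairs and $\Omega^6_7=\lambda(de^6+de^7)$, exactly as in the paper), verify the instanton condition via the criterion $\beta\wedge *\Theta=0$ applied to $e^{12}-e^{34}$, $e^{13}+e^{24}$, $e^{14}-e^{23}$, and sum the squares to get $p_1(A_\lambda)$. Your wedge bookkeeping ($\alpha_a\wedge\alpha_b=0$ for $a\neq b$, $\alpha_a\wedge\alpha_a=-2e^{1234}$) reproduces the stated Pontrjagin form, and in fact spells out details the paper leaves as "a direct calculation."
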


\begin{proof}
A direct calculation shows that the non-zero curvature forms
$(\Omega^{A_\lambda})^i_j$ of the connection $A_\lambda$ are given
by:
$$
(\Omega^{A_\lambda})^i_j = \lambda c_1(e^{13}+e^{24})+\lambda
c_2(e^{14}-e^{23}),\quad\quad (\Omega^{A_\lambda})^6_7= -2\lambda
t(e^{12}-e^{34}) + \lambda c_1(e^{13}+e^{24})+\lambda
c_2(e^{14}-e^{23}).
$$
for
$(i,j)=(1,2),(1,3),(1,4),(1,5),(2,3),(2,4),(2,5),(3,4),(3,5),(4,5)$.
On the other hand, since the Lie algebra of $G_2$ can be identified
with the subspace of 2-forms which annihilate $*\Theta$ and
$(e^{12}-e^{34})\wedge *\Theta = (e^{13}+e^{24}) \wedge *\Theta
=(e^{14}-e^{23})\wedge *\Theta =0$, the connection $A_\lambda$ is a
$G_2$-instanton.
\end{proof}

As a consequence we get the following compact 7-dimensional
solutions.

\begin{thrm}\label{solution-dim7}
Let $A_\lambda$ be the $G_2$-instanton on $M^7$ given above. If
$\lambda^2 < \min \{ 8t^2, 2(c_1^2+c_2^2)/11 \}$, then
$$
dT = 2\pi^2 \alpha' \, (p_1(\nabla^+)-p_1(A_\lambda)),
$$
with $\alpha'>0$ and $(M^7,\Theta,\nabla^+,A_{\lambda})$ is a
compact solution to the heterotic Killing spinor equations
\eqref{sup1} satisfying the anomaly cancellation condition
\eqref{acgen}.
\end{thrm}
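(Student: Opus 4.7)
The plan is to verify, one by one, the four conditions for a constant-dilaton solution in dimension~7 listed in Section~\ref{cdil}. Conditions (a)--(b) require the $G_2$-structure \eqref{G2-struct} to be cocalibrated of pure type ($d{*}\Theta=0$ and $d\Theta\wedge\Theta=0$) and the connection $A_\lambda$ to be a $G_2$-instanton. The latter is exactly Lemma~\ref{G2-instanton}, and the former is a direct check from the structure equations \eqref{7-example} together with the local models \eqref{11}--\eqref{12}; once this is done, the existence result quoted in Section~3 guarantees a non-trivial parallel spinor for $\nabla^+$ and solves the dilatino equation with constant dilaton. Compactness of the underlying manifold follows from Malcev's theorem: with $(c_1,c_2)\in\mathbb{Q}^2\setminus\{(0,0)\}$ the structure constants of $\frh^7$ are rational, so the simply-connected nilpotent Lie group $H^7$ admits a cocompact lattice $\Gamma$, and all left-invariant objects descend to $M^7=\Gamma\backslash H^7$.

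The only step that requires any care is (c), namely \eqref{ac7} with $\alpha'>0$. Using the expressions already derived in the discussion preceding Lemma~\ref{G2-instanton},
\[dT=-2(4t^2+c_1^2+c_2^2)\,e^{1234},\]
\[p_1(\nabla^+)=-\tfrac{1}{2\pi^2}\bigl(16t^4+(c_1^2+c_2^2)^2\bigr)\,e^{1234},\]
\[p_1(A_\lambda)=-\tfrac{\lambda^2}{4\pi^2}\bigl(4t^2+11(c_1^2+c_2^2)\bigr)\,e^{1234},\]
both sides of $dT=2\pi^2\alpha'(p_1(\nabla^+)-p_1(A_\lambda))$ are scalar multiples of $e^{1234}$, so the equation collapses to a single linear equation for $\alpha'$ whose solution is
\[\alpha'=\frac{4(4t^2+c_1^2+c_2^2)}{32t^4+2(c_1^2+c_2^2)^2-\lambda^2\bigl(4t^2+11(c_1^2+c_2^2)\bigr)}.\]

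The main (and really only) obstacle is then positivity of the denominator, which is exactly where the two hypotheses enter: $\lambda^2<8t^2$ yields $4t^2\lambda^2<32t^4$, while $\lambda^2<2(c_1^2+c_2^2)/11$ yields $11\lambda^2(c_1^2+c_2^2)<2(c_1^2+c_2^2)^2$; adding the two inequalities gives positivity of the denominator, hence $\alpha'>0$. All four conditions of Section~\ref{cdil} therefore hold on the compact nilmanifold $M^7$, proving the theorem.
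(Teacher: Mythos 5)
Your verification of the anomaly cancellation condition is correct and is essentially the paper's own argument: both $dT$ and $p_1(\nabla^+)-p_1(A_\lambda)$ are multiples of $e^{1234}$, and the two hypotheses $\lambda^2<8t^2$ and $\lambda^2<2(c_1^2+c_2^2)/11$ force $p_1(\nabla^+)-p_1(A_\lambda)$ to be a negative multiple of $e^{1234}$, matching the sign of $dT$; the paper argues by sign, while you additionally exhibit the explicit value of $\alpha'$, which agrees with that computation. One caveat: your closing sentence, that \emph{all four} conditions of Section~\ref{cdil} hold, overstates what you (and the theorem) establish. Condition d) concerns \eqref{supmot}, i.e.\ the passage to the equations of motion, and is neither claimed in Theorem~\ref{solution-dim7} nor verified by your argument; indeed, for $(c_1,c_2)\neq(0,0)$ the torsion is not $\nabla^+$-parallel and the paper later notes that these nilmanifolds satisfy the supersymmetry equations and the anomaly cancellation but not the equations of motion. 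Restricting your conclusion to conditions a)--c) makes the proof complete and correct.
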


\begin{proof}
Notice that $p_1(\nabla^+)-p_1(A_\lambda)=\frac{1}{4 \pi^2}\left[
4t^2(\lambda^2-8t^2) + (c_1^2+c_2^2) \left(
11\lambda^2-2(c_1^2+c_2^2) \right) \right] e^{1234}$. Therefore, if
$\lambda^2 < \min \{ 8t^2, 2(c_1^2+c_2^2)/11 \}$ then
$p_1(\nabla^+)-p_1(A_\lambda)$ is a negative multiple of $e^{1234}$.
Since $dT=-2(4t^2+c_1^2+c_2^2)e^{1234}$, the result follows.
\end{proof}

\begin{rmrk}
The first Pontrjagin form of the Levi-Civita connection is given by
$$
p_1(\nabla^g)= -\frac{1}{16 \pi^2} \left[ 3(4t^2-c_1^2-c_2^2)^2 +
16t^2(c_1^2+c_2^2) \right] e^{1234},
$$
so there is $\lambda\not=0$ sufficiently small such that $dT =2\pi^2
\alpha' \, (p_1(\nabla^g)-p_1(A_\lambda))$, with $\alpha'>0$.
\end{rmrk}


\bigskip

\noindent{\bf The 7-dimensional generalized Heisenberg group:} Next
we construct a 7-dimensional compact solution to the equations of
motion which is not an extension of the 6-dimensional nilmanifolds
given in~\cite{FIUV}. Let $H(3,1)$ be the 7-dimensional generalized
Heisenberg group, i.e. the nilpotent Lie group consisting of the
matrices of real numbers of the form
$$
H(3,1)=\left\{ \left( \begin{array}{ccccc}
1 & x_1 & x_2 & x_3 & z \\
0 & 1 & 0 & 0 & y_1 \\
0 & 0 & 1 & 0 & y_2 \\
0 & 0 & 0 & 1 & y_3\\
0 & 0 & 0 & 0 & 1
\end{array} \right)
\mid x_i,y_i, z \in \mathbb{R}, 1\leq i\leq 3 \right\}.
$$

We consider the basis for the left invariant 1-forms on $H(3,1)$
given by
$$
e^1= \frac1a d x_1,\ \ e^2= d y_1,\ \ e^3 =
 \frac1b d x_2, \ \ e^4d y_2,\ \ e^5 = \frac1c d x_3, \ \ e^6= d y_3,\ \ e^7 = x_1 dy_1 +
x_2 dy_2 + x_3 dy_3 - dz,
$$
where $a,b,c\in \mathbb{R}-\{0\}$, so that the structure equations
become
\begin{equation}\label{h(3,1)}
\left\{
  \begin{aligned}
  &de^1=de^2=de^3=de^4=de^5=de^6=0, \\
  &de^7= a\, e^{12}+ b\, e^{34} + c\, e^{56}.
  \end{aligned}
\right.
\end{equation}

\begin{lemma}\label{co-pure}
The $G_2$-structure given by
$$
\Theta= (e^{12}+ e^{34} + e^{56})\wedge e^7 + e^{135} - e^{146}
-e^{236} - e^{245}
$$
is cocalibrated for each $a,b,c \in \mathbb{R}-\{0\}$. Moreover,
$\Theta$ is of pure type if and only if $c=-a-b$ or, equivalently,
$de^7 \in \frak{su}(3)$.

\end{lemma}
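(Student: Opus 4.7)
The plan is to check both assertions by direct computation from the structure equations \eqref{h(3,1)} together with the model formulas \eqref{11} and \eqref{12}. The chosen $\Theta$ coincides (up to reordering of summands) with the model $G_2$-form \eqref{11}, so
\[
*\Theta = e^{3456}+e^{1457}+e^{1256}+e^{1234}+e^{2357}+e^{1367}-e^{2467}.
\]

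For cocalibration I would verify $d*\Theta=0$. Since $e^1,\ldots,e^6$ are closed, only the four summands $e^{1457}$, $e^{2357}$, $e^{1367}$, $-e^{2467}$ involving $e^7$ contribute, each producing a term of the form $\pm e^{ijk}\wedge de^7$. The structural observation is that the underlying 3-sets $\{1,4,5\}$, $\{2,3,5\}$, $\{1,3,6\}$, $\{2,4,6\}$ each meet every pair in $\{\{1,2\},\{3,4\},\{5,6\}\}$ in exactly one index; since $de^7$ lies in the span of $e^{12}$, $e^{34}$, $e^{56}$, every such wedge product has a repeated index and vanishes. Hence $d*\Theta=0$ for all $a,b,c$, and by the second expression in \eqref{g2li} this automatically forces $\theta^7=0$, so $\Theta$ is cocalibrated (balanced and integrable in the sense of the paper).

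For the pure-type condition $d\Theta\wedge\Theta=0$, only the three summands $e^{127}$, $e^{347}$, $e^{567}$ of $\Theta$ have nontrivial differential, and a short computation gives
\[
d\Theta = (a+b)\,e^{1234}+(a+c)\,e^{1256}+(b+c)\,e^{3456}.
\]
When each of these 4-forms is wedged with $\Theta$, the only nonzero contribution comes from the unique complementary monomial in $\Theta$---respectively $e^{567}$, $e^{347}$, $e^{127}$---each giving $+e^{1234567}$; all other summands of $\Theta$ share an index with the relevant 4-form and drop out. Thus $d\Theta\wedge\Theta = 2(a+b+c)\,e^{1234567}$, and the pure-type condition is equivalent to $c=-a-b$.

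To identify this with $de^7\in\frak{su}(3)$ I would use the standard SU(3)-structure on $\langle e^1,\ldots,e^6\rangle$ whose fundamental 2-form is $F=e^{12}+e^{34}+e^{56}$. Each of $e^{12}$, $e^{34}$, $e^{56}$ is of type $(1,1)$, so $de^7\in\frak{u}(3)$ automatically; the reduction to $\frak{su}(3)$ is the primitivity condition $de^7\wedge F^2=0$. A one-line calculation gives $de^7\wedge F^2 = 2(a+b+c)\,e^{123456}$, matching the previous condition. The argument is entirely mechanical; the only step worth flagging is the index-disjointness pattern above, which is what makes $d*\Theta$ vanish identically in the three parameters.
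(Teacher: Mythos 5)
Your proof is correct and takes essentially the same route as the paper, whose proof is the single observation that a direct calculation gives $d*\Theta=0$ and $\Theta\wedge d\Theta=2(a+b+c)\,e^{1234567}$; your index-disjointness remark and the explicit $d\Theta=(a+b)e^{1234}+(a+c)e^{1256}+(b+c)e^{3456}$ just spell out that calculation. Your closing verification that $de^7\in\frak{su}(3)$ amounts to the primitivity condition $de^7\wedge F^2=2(a+b+c)\,e^{123456}=0$ supplies a detail the paper leaves implicit, but it is the same elementary computation in spirit.
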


\begin{proof}
A direct simple calculation shows that $d*\Theta=0$ and
$\Theta\wedge d\Theta= 2 (a+b+c) e^{1234567}$.
\end{proof}

From now on, let us consider $c=-a-b\not=0$ in equations
\eqref{h(3,1)}. The torsion 3-form for the cocalibrated
$G_2$-structure of pure type is given by
$$
T=-*d\Theta=(d e^7)\wedge e^7 = a\, e^{127} + b\, e^{347} -(a+b)
e^{567}.
$$
Hence
\begin{equation}\label{d-torsion}
dT= 2ab\, e^{1234} - 2a(a+b) e^{1256} -2b(a+b) e^{3456}.
\end{equation}
Moreover, it is forward to check that $T$ is parallel with respect
to the torsion connection $\nabla^+$, i.e.

\begin{lemma}\label{parallel-7h}
For any $a,b\in \mathbb{R}-\{0\}$ such that $b\not=-a$, we have
$\nabla^+ T=0$.
\end{lemma}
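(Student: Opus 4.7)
The plan is a direct computation using formulas \eqref{lciv} and \eqref{pm}. The strategy is to show that $\nabla^+_{E_k}$ vanishes identically on the cotangent basis for $k \neq 7$, while $\nabla^+_{E_7}$ acts as an infinitesimal rotation on each of the three coordinate 2-planes $\mathrm{span}(e^1,e^2)$, $\mathrm{span}(e^3,e^4)$, $\mathrm{span}(e^5,e^6)$ and fixes $e^7$. Since $T$ is a sum of area forms on these planes wedged with $e^7$, parallelism then follows by Leibniz.

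First I would read off from \eqref{h(3,1)} that the only non-trivial brackets of the dual basis are $[E_1,E_2]=-a\,E_7$, $[E_3,E_4]=-b\,E_7$, $[E_5,E_6]=(a+b)E_7$, and feed them into the Koszul-type formula \eqref{lciv} to compute the Levi-Civita 1-forms. Only those $(\sigma^g)^i_j$ whose index triple $\{i,j,k\}$ contains $7$ together with one of the pairs $\{1,2\},\{3,4\},\{5,6\}$ are non-zero; for the first block,
\[
(\sigma^g)^1_2 = -\tfrac{a}{2}\,e^7,\qquad (\sigma^g)^1_7 = -\tfrac{a}{2}\,e^2,\qquad (\sigma^g)^2_7 = \tfrac{a}{2}\,e^1,
\]
with analogous expressions obtained by replacing $a$ with $b$ and $c=-(a+b)$ for the other two blocks.

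Next, from $T = a\,e^{127}+b\,e^{347}+c\,e^{567}$ I would compute the torsion 1-forms $T^i_j = T(E_i,E_j,\cdot)$, which are supported on the same index triples, and assemble $(\sigma^+)^i_j = (\sigma^g)^i_j - \tfrac{1}{2}T^i_j$ via \eqref{pm}. The heart of the argument is a cancellation: the mixed entries $(\sigma^g)^i_7 = \pm\tfrac{a}{2}\,e^j$ are exactly killed by $-\tfrac{1}{2}T^i_7 = \mp\tfrac{a}{2}\,e^j$, whereas the horizontal entries $(\sigma^g)^i_j = -\tfrac{a}{2}\,e^7$ are doubled by $-\tfrac{1}{2}T^i_j$. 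Consequently the only non-zero $\nabla^+$-connection forms are
\[
(\sigma^+)^1_2=-a\,e^7,\qquad (\sigma^+)^3_4=-b\,e^7,\qquad (\sigma^+)^5_6=-c\,e^7,
\]
together with their skew counterparts.

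Finally I would translate this into the action on 1-forms via $\nabla^+_{E_k}e^i = -(\sigma^+)^i_s(E_k)\,e^s$: this vanishes for $k\neq 7$, while $\nabla^+_{E_7}$ acts by $e^1\mapsto a\,e^2$, $e^2\mapsto -a\,e^1$, and analogously with $b,c$ on the other two blocks, with $\nabla^+_{E_7}e^7=0$. Leibniz then gives $\nabla^+_{E_7}(e^i\wedge e^j\wedge e^7)=0$ for each of the three summands of $T$, since $e^i\wedge e^j$ is the area form of a plane on which $\nabla^+_{E_7}$ acts skew-symmetrically. The only step demanding any care is the sign bookkeeping in the cancellation yielding $(\sigma^+)^i_7=0$; I expect this to be automatic because both the Levi-Civita and torsion contributions originate from the single structure equation $de^7=a\,e^{12}+b\,e^{34}+c\,e^{56}$. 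In particular, the parallelism of $T$ does not use the relation $c=-a-b$ at all, so the hypothesis $b\neq -a$ enters only insofar as the $G_2$-structure is required to be of pure type, not in the present argument.
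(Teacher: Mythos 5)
Your computation is correct and is exactly the direct verification the paper has in mind (the paper simply asserts the lemma as a straightforward check, with the relevant connection data obtained from (\ref{curvature}), (\ref{lciv}), (\ref{pm}) as you do): the Levi-Civita and torsion contributions cancel in the mixed entries, leaving only $(\sigma^+)^1_2=-a\,e^7$, $(\sigma^+)^3_4=-b\,e^7$, $(\sigma^+)^5_6=-c\,e^7$, which indeed reproduce the curvature forms \eqref{7-h-curvature} and annihilate $T$ by the Leibniz rule. Your observation that $c=-a-b$ is not needed for parallelism, only for the pure-type condition, is also accurate.
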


On the other hand, by (\ref{curvature}), (\ref{lciv}) and (\ref{pm})
we have that the non-zero curvature forms $(\Omega^+)^i_j$ of the
torsion connection $\nabla^+$ are
\begin{equation}\label{7-h-curvature}
\begin{array}{l}
(\Omega^+)^1_2 = -a\left( a\, e^{12} + b\, e^{34} -(a+b) e^{56} \right),\\[8pt]
(\Omega^+)^3_4 = -b\left( a\, e^{12} + b\, e^{34} -(a+b) e^{56} \right),\\[8pt]
(\Omega^+)^5_6= -(\Omega^+)^1_2 - (\Omega^+)^3_4  = (a+b) \left( a\,
e^{12} + b\, e^{34} -(a+b) e^{56} \right).
\end{array}
\end{equation}

Let $\Gamma(3,1)$ denote the subgroup of matrices of $H(3,1)$ with
integer entries and consider the compact nilmanifold
$N(3,1)=\Gamma(3,1)\backslash H(3,1)$. We can describe $N(3,1)$ as a
principal circle bundle over a $6$-torus
 $$
 S^1 \hookrightarrow N(3,1) \to \mathbb{T}^6,
 $$
whose connection 1-form $\eta=e^7$ has curvature $d\eta=a
(e^{12}-e^{56}) + b (e^{34} - e^{56})$ in $\frak{su}(3)$.

Next we show a 3-parametric family of $G_2$-instantons on the
nilmanifold $N(3,1)$.

\begin{prop}\label{G2-inst-h}
Let $A_{\lambda,\mu,\tau}$ be the linear connection on $N(3,1)$
defined by the connection forms
$$
(\sigma^{A_{\lambda,\mu,\tau}})^1_2 = -
(\sigma^{A_{\lambda,\mu,\tau}})^2_1 = \lambda\, e^7,\quad
(\sigma^{A_{\lambda,\mu,\tau}})^3_4 = -
(\sigma^{A_{\lambda,\mu,\tau}})^4_3 = \mu\, e^7,\quad
(\sigma^{A_{\lambda,\mu,\tau}})^5_6 = -
(\sigma^{A_{\lambda,\mu,\tau}})^6_5 = \tau\, e^7,
$$
and $(\sigma^{A_{\lambda,\mu,\tau}})^i_j=0$ for the remaining
$(i,j)$, where $\lambda,\mu,\tau\in \mathbb{R}$. Then,
$A_{\lambda,\mu,\tau}$ is a $G_2$-instanton with respect to the
cocalibrated $G_2$-structure of pure type given in
Lemma~\ref{co-pure} for any $a,b$, $A_{\lambda,\mu,\tau}$ preserves
the metric, and its first Pontrjagin form is given by
$$p_1(A_{\lambda,\mu,\tau})=\frac{\lambda^2+\mu^2+\tau^2}{4\pi^2} \left( ab\, e^{1234} -
a(a+b)\, e^{1256} - b(a+b)\, e^{3456} \right).$$
\end{prop}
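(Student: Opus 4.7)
\textbf{Proof proposal for Proposition \ref{G2-inst-h}.} The plan is to exploit the fact that the connection forms are all either zero or scalar multiples of the single 1-form $e^7$, which collapses both the curvature computation and the instanton verification to a single algebraic identity about $de^7$.

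First, metric-preservation is immediate: by inspection the defining forms satisfy $(\sigma^{A_{\lambda,\mu,\tau}})^i_j = -(\sigma^{A_{\lambda,\mu,\tau}})^j_i$, since the three nonzero pairs $(1,2),(3,4),(5,6)$ are declared skew and all others vanish. Next I would compute the curvature. Because every nonzero $(\sigma^{A_{\lambda,\mu,\tau}})^i_j$ is proportional to $e^7$, every quadratic term $(\sigma^{A_{\lambda,\mu,\tau}})^i_k \wedge (\sigma^{A_{\lambda,\mu,\tau}})^k_j$ contains $e^7 \wedge e^7 = 0$ and hence vanishes. So from \eqref{curvature} we get $(\Omega^{A_{\lambda,\mu,\tau}})^i_j = d(\sigma^{A_{\lambda,\mu,\tau}})^i_j$, and substituting $c=-a-b$ in \eqref{h(3,1)} gives
\[
(\Omega^{A_{\lambda,\mu,\tau}})^1_2 = \lambda\,\omega, \qquad (\Omega^{A_{\lambda,\mu,\tau}})^3_4 = \mu\,\omega, \qquad (\Omega^{A_{\lambda,\mu,\tau}})^5_6 = \tau\,\omega,
\]
where $\omega := a\, e^{12} + b\, e^{34} - (a+b)\, e^{56}$, and all other curvature forms are zero.

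For the instanton condition I would use the characterization recalled in Section 2 that $\beta \in \frak{g}_2$ if and only if $\beta \wedge *\Theta = 0$. Since each nonzero curvature form is a scalar multiple of the single 2-form $\omega$, it suffices to show $\omega \wedge *\Theta = 0$. Using the explicit formula \eqref{12} for $*\Theta$ (the chosen $\Theta$ in Lemma \ref{co-pure} agrees with the standard model \eqref{11}), a direct bookkeeping of indices yields $e^{12}\wedge*\Theta = e^{34}\wedge*\Theta = e^{56}\wedge*\Theta = e^{123456}$, whence $\omega \wedge *\Theta = (a+b-(a+b))\,e^{123456}=0$. Conceptually this is forced by the hypothesis $c=-a-b$, which is exactly the condition from Lemma \ref{co-pure} that $de^7 \in \frak{su}(3) \subset \frak{g}_2$.

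Finally, for the first Pontrjagin form, from the formula given after \eqref{curvature},
\[
p_1(A_{\lambda,\mu,\tau}) = \frac{1}{8\pi^2}\sum_{i<j}(\Omega^{A_{\lambda,\mu,\tau}})^i_j\wedge(\Omega^{A_{\lambda,\mu,\tau}})^i_j = \frac{\lambda^2+\mu^2+\tau^2}{8\pi^2}\,\omega\wedge\omega,
\]
and expanding $\omega\wedge\omega = 2\bigl[ab\,e^{1234} - a(a+b)\,e^{1256} - b(a+b)\,e^{3456}\bigr]$ produces the stated expression. The only step requiring genuine bookkeeping is the instanton verification $\omega\wedge *\Theta=0$; everything else reduces to the single vanishing $e^7\wedge e^7 = 0$ and wedge arithmetic in the standard basis.
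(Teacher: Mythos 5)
Your proposal is correct and follows essentially the same route as the paper: compute the curvature forms (which, as you note, reduce to $\lambda\,de^7,\ \mu\,de^7,\ \tau\,de^7$ with $de^7=a\,e^{12}+b\,e^{34}-(a+b)\,e^{56}$ after setting $c=-a-b$), then verify the instanton condition via the characterization of $\frak{g}_2$ as the 2-forms annihilating $*\Theta$, exactly as the paper does, and finish with the direct wedge computation of $p_1$. The extra details you supply (vanishing of the quadratic terms since everything is proportional to $e^7$, the explicit identities $e^{12}\wedge*\Theta=e^{34}\wedge*\Theta=e^{56}\wedge*\Theta=e^{123456}$, and the expansion of $\omega\wedge\omega$) are just the "direct calculation" the paper leaves implicit.
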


\begin{proof}
A direct calculation shows that the non-zero curvature forms
$(\Omega^{A_{\lambda,\mu,\tau}})^i_j$ of the connection
$A_{\lambda,\mu,\tau}$ are:
$$
\begin{array}{l}
(\Omega^{A_{\lambda,\mu,\tau}})^1_2 = \lambda \left( a\, e^{12} +
b\, e^{34} -(a+b) e^{56} \right),\\[8pt]
(\Omega^{A_{\lambda,\mu,\tau}})^3_4 = \mu \left( a\, e^{12} +
b\, e^{34} -(a+b) e^{56} \right),\\[8pt]
(\Omega^{A_{\lambda,\mu,\tau}})^5_6= \tau \left( a\, e^{12} + b\,
e^{34} -(a+b) e^{56} \right).
\end{array}
$$
On the other hand, the Lie algebra of $G_2$ can be identified with
the subspace of 2-forms which annihilate $*\Theta$. Since $( a\,
e^{12} + b\, e^{34} -(a+b) e^{56} )\wedge *\Theta =0$, the
connection $A_{\lambda,\mu,\tau}$ is a $G_2$-instanton.
\end{proof}

The next result gives explicit compact valid solutions on $N(3,1)$
to the heterotic supersymmetry equations with non-zero flux and
constant dilaton satisfying the anomaly cancellation condition which
also solve  the equations of motion \eqref{mot} due to Lemma
\ref{parallel-7h} and Theorem \ref{thac}.

\begin{thrm}\label{N(3,1)}
Let  $N(3,1)$  be the compact cocalibrated of pure type
$G_2$-nilmanifold, $\nabla^+$ be the torsion connection  and
$A_{\lambda,\mu,\tau}$ the $G_2$-instanton given in
Proposition~\ref{G2-inst-h}. If $(\lambda,\mu,\tau)\not=(0,0,0)$ are
small enough so that $\lambda^2+\mu^2+\tau^2 < 2(a^2+ab+b^2)$, then
$$
dT= 2\pi^2 \alpha' \, (p_1(\nabla^+)- p_1(A_{\lambda,\mu,\tau})),
$$
where $\alpha' = 4(2(a^2+ab+b^2)-\lambda^2-\mu^2-\tau^2)^{-1} >0$.

Therefore, the manifold
$(N(3,1),\Theta,\nabla^+,A_{\lambda,\mu,\tau})$ is a compact
solution to the supersymmetry equations \eqref{sup1} obeying the
anomaly cancellation \eqref{acgen} and solving the equations of
motion \eqref{mot}.

The Riemannian metric is locally given by
\begin{equation*}
\begin{array}{rl}
g=  \frac1{a^2}d x_1^2 + d y_1^2 + \frac1{b^2}d x_2^2   + d
y_2^2 + \frac{1}{a^2+b^2} d x_3^2 + d y_3^2 + (x_1 dy_1 + x_2 dy_2
+ x_3 dy_3 - dz)^2.
\end{array}
\end{equation*}

\end{thrm}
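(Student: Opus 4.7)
The plan is to verify four assertions --- the anomaly identity with the stated value of $\alpha'$, the supersymmetry equations, the anomaly cancellation, and the equations of motion --- and then read off the local form of the metric. All the heavy lifting has been done in the preceding lemmas; the backbone of the argument will be the observation that $dT$, $p_1(\nabla^+)$ and $p_1(A_{\lambda,\mu,\tau})$ are all real scalar multiples of the same 4-form
\[
\Xi := ab\, e^{1234} - a(a+b)\, e^{1256} - b(a+b)\, e^{3456},
\]
so the anomaly identity $dT = 2\pi^2\alpha'\bigl(p_1(\nabla^+)-p_1(A_{\lambda,\mu,\tau})\bigr)$ collapses to matching a single scalar coefficient and solving for $\alpha'$.

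To compute $p_1(\nabla^+)$, I observe from \eqref{7-h-curvature} that the three nonzero curvature 2-forms of $\nabla^+$ are all multiples of the single 2-form $\omega := a\, e^{12} + b\, e^{34} - (a+b)\, e^{56}$, with coefficients $-a$, $-b$, $a+b$. A direct expansion gives $\omega\wedge\omega = 2\Xi$, so
\[
p_1(\nabla^+) = \frac{a^2+b^2+(a+b)^2}{8\pi^2}\,\omega\wedge\omega = \frac{a^2+ab+b^2}{2\pi^2}\,\Xi.
\]
Combined with $p_1(A_{\lambda,\mu,\tau}) = \frac{\lambda^2+\mu^2+\tau^2}{4\pi^2}\,\Xi$ from Proposition~\ref{G2-inst-h} and with $dT = 2\,\Xi$ from \eqref{d-torsion}, matching coefficients forces exactly $\alpha' = 4\bigl(2(a^2+ab+b^2)-\lambda^2-\mu^2-\tau^2\bigr)^{-1}$, which is positive precisely under the smallness hypothesis on $\lambda^2+\mu^2+\tau^2$.

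For the remaining assertions: the gravitino and dilatino equations with constant dilaton are equivalent, in dimension~7, to the $G_2$-structure being cocalibrated of pure type, which is Lemma~\ref{co-pure} for the chosen $c=-a-b$; the gaugino equation is Proposition~\ref{G2-inst-h}. Together with the anomaly identity of the previous paragraph, this shows that $(N(3,1),\Theta,\nabla^+,A_{\lambda,\mu,\tau})$ solves \eqref{sup1} together with \eqref{acgen}. Since $\nabla^+T=0$ by Lemma~\ref{parallel-7h}, Corollary~\ref{thacp} applies directly and yields the equations of motion \eqref{mot} with $R=R^+$. The local expression for the Riemannian metric is then obtained at once by writing $g=\sum_{i=1}^{7}e^i\otimes e^i$ with the orthonormal coframe introduced just before \eqref{h(3,1)} and substituting the explicit formulas for $e^1,\dots,e^7$ in terms of $dx_i$, $dy_i$, $dz$. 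The only delicate step is the constant-matching in the first part; but the factorization through $\omega$ and $\Xi$ reduces the entire anomaly identity to essentially a one-line check, so I do not anticipate a structural obstacle.
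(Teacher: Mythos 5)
Your proof is correct and essentially identical to the paper's: both compute $p_1(\nabla^+)$ from the curvature forms \eqref{7-h-curvature}, match the single scalar coefficient of $e^{1234}$, $e^{1256}$, $e^{3456}$ against $dT$ from \eqref{d-torsion} and $p_1(A_{\lambda,\mu,\tau})$ from Proposition~\ref{G2-inst-h} to solve for $\alpha'$, and deduce the equations of motion from $\nabla^+T=0$ (Lemma~\ref{parallel-7h}) together with Theorem~\ref{thac}, your appeal to Corollary~\ref{thacp} being the same argument. One cosmetic remark: substituting the coframe (with $c=-a-b$) gives the coefficient $\frac{1}{(a+b)^2}$ for $dx_3^2$, consistent with \eqref{g8}, so the $\frac{1}{a^2+b^2}$ in the displayed metric is a typo rather than something your computation should reproduce.
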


\begin{proof}
The non-zero curvature forms of the torsion connection
$\nabla^+_{a,b}$ are given by \eqref{7-h-curvature}, which implies
that its first Pontrjagin form is 
$$
p_1(\nabla^+)=  \frac{a^2 + ab +b^2}{2 \pi^2} \left( ab\, e^{1234}
-a(a+b) e^{1256} -b(a+b) e^{3456} \right).
$$
Now the proof follows directly from \eqref{d-torsion} and
Proposition~\ref{G2-inst-h}. The final assertion in the theorem
follows from Lemma~\ref{parallel-7h} and Theorem~\ref{thac} .
\end{proof}

\begin{rmrk}
The first Pontrjagin form of the Levi-Civita connection is given by
$$
\begin{array}{rl}
p_1(\nabla^g)= \frac{1}{32 \pi^2} [ \!\!\!& \!\! ab(5 a^2+4ab+5
b^2) e^{1234} - a(a+b)(6 a^2+6ab+5 b^2) e^{1256}  \\[8pt]
\!\!& \!\! - b(a+b)(5 a^2+6ab+6 b^2) e^{3456} \, ].
\end{array}
$$
It is easy to see that there is no solution to the heterotic
supersymmetry equations for $\nabla=\nabla^g$ using the instantons
of Lemma~\ref{G2-inst-h}.
\end{rmrk}

\subsection{Explicit solutions in dimension 8}
We consider balanced $Spin(7)$-structures, i.e. $\theta^8=0$. From
\eqref{tsol8} we have that the torsion 3-form in this case is given
by
\begin{equation}\label{nabla-8}
\nabla^+=\nabla^g+\frac12\, T, \quad\quad H=T=*^8d\Phi.
\end{equation}
Starting from a cocalibrated $G_2$-structure of pure type $\Theta$
on a 7-manifold $M^7$ it is easy to see that the $Spin(7)$-structure
given by $\Phi= e^1\wedge\Theta + *^7\Theta$ on the product
$M^8=M^7\times S^1$ is balanced, where $e^1$ denotes the standard
1-form on the circle $S^1$. Moreover, following the argument given
in \cite[Theorem 5.1]{II} we conclude that the natural extension of
a $G_2$-instanton on $M^7$ gives rise to a $Spin(7)$-instanton on
$M^8$, and if the torsion connection of the $G_2$-structure
satisfies the Bianchi identity then the corresponding
$\nabla^+$ given in \eqref{nabla-8} also satisfies~\eqref{ac8}. We
can apply this to the compact 7-dimensional explicit solutions given
in the preceding section to get  compact solutions in
dimension~8:

\begin{cor}\label{product-sol-dim8}
Let $(M^7,\Theta)$ be a compact cocalibrated $G_2$-nilmanifold of
pure type with a $G_2$-instanton solving the modified Bianchi
identity for $\nabla=\nabla^+$ or $\nabla^g$. Then, the
$Spin(7)$-manifold $M^8=M^7\times S^1$ with the structure
$\Phi=e^1\wedge\Theta + *^7\Theta$, the $Spin(7)$-instanton obtained as an
extension of the $G_2$-instanton and $\nabla$ being the Levi-Civita
connection $\nabla^g$ or the torsion connection $\nabla^+$ given in
\eqref{nabla-8}, provides a compact valid solution to the
supersymmetry equations in dimension~8. In particular, starting with
the solutions on the generalized Heisenberg compact nilmanifold
$N(3,1)$ given in Theorem \ref{N(3,1)} one obtains solutions to the
equations of motion in dimension 8 for $\nabla=\nabla^+$.
\end{cor}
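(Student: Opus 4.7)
The strategy is to lift the 7-dimensional data to the product $M^8 = M^7 \times S^1$, in the spirit of \cite[Theorem 5.1]{II}, and reduce the 8-dimensional supersymmetry and anomaly conditions to the 7-dimensional ones that hold by hypothesis. My plan has three verifications plus a final appeal to Corollary~\ref{thacp} (and hence Theorem~\ref{thac}) for the concluding claim about the equations of motion on $N(3,1) \times S^1$.

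First I would verify that $\Phi = e^1 \wedge \Theta + *^7 \Theta$ is a balanced $Spin(7)$-structure on $M^8$. Using the two hypotheses $d*^7\Theta = 0$ (cocalibrated) and $d\Theta\wedge\Theta = 0$ (pure type), compute $d\Phi = -e^1 \wedge d\Theta$; then substitute into the Lee-form formula \eqref{c2}, and exploit the product-star identities $*^8\alpha = e^1 \wedge *^7\alpha$ and $*^8(e^1 \wedge \alpha) = *^7\alpha$ for forms $\alpha$ pulled back from $M^7$, to conclude $\theta^8 = 0$. Second, the same calculation yields the 8-dimensional torsion $T^{(8)} = *^8 d\Phi = -*^7 d\Theta = T^{(7)}$. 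Since $T^{(8)}$ has no $e^1$ factor, $\nabla^{+,(8)}$ splits as the product of $\nabla^{+,(7)}$ with the trivial connection on $TS^1$, so its first Pontrjagin form coincides with that of $\nabla^{+,(7)}$ pulled back to $M^8$ (and analogously for the Levi-Civita connections).

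Third, under the embedding $\frak{g}_2 \hookrightarrow \frak{spin}(7)$ arising from the decomposition $\Lambda^2(\mathbb{R}^8) = \Lambda^2(\mathbb{R}^7) \oplus (e^1 \wedge \Lambda^1(\mathbb{R}^7))$, the instanton conditions \eqref{7inst} and \eqref{8inst} are compatible: any 2-form on $M^7$ annihilating $*^7\Theta$ also annihilates $\Phi$ when viewed on $M^8$. Hence the trivial extension of the $G_2$-instanton along $S^1$ is a $Spin(7)$-instanton with the same first Pontrjagin form. Combining the three identifications, the 8-dimensional anomaly cancellation \eqref{ac8} reduces term-by-term to the 7-dimensional \eqref{ac7}, which holds by hypothesis, giving a compact valid solution to the supersymmetry equations in dimension~8.

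For the final assertion about $N(3,1) \times S^1$: Lemma~\ref{parallel-7h} gives $\nabla^{+,(7)} T^{(7)} = 0$, and the product splitting propagates this to $\nabla^{+,(8)} T^{(8)} = 0$. Corollary~\ref{thacp} then forces the equations of motion \eqref{mot} to follow from the supersymmetry and anomaly cancellation already established on $M^8$. The main technical obstacle is the first step (verifying the balanced condition), which requires careful bookkeeping with the two Hodge star operators; once the splitting $\Lambda^\bullet(M^8) = \Lambda^\bullet(M^7) \oplus e^1 \wedge \Lambda^{\bullet-1}(M^7)$ is used consistently throughout, the remaining steps are essentially formal.
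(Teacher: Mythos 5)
Your strategy is essentially the paper's: the text preceding the corollary establishes exactly your three points (balancedness of $\Phi=e^1\wedge\Theta+*^7\Theta$ on $M^7\times S^1$, extension of the $G_2$-instanton to a $Spin(7)$-instanton following \cite[Theorem 5.1]{II}, and persistence of the Bianchi identity for the product connection \eqref{nabla-8}), and the equations-of-motion claim for $N(3,1)\times S^1$ is obtained, as you do, from the parallel torsion of Lemma~\ref{parallel-7h} propagated to the product together with Theorem~\ref{thac} / Corollary~\ref{thacp}.

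One justification in your third step should be corrected. You argue that a $2$-form $\beta$ on $M^7$ with $\beta\wedge *^7\Theta=0$ ``also annihilates $\Phi$'' on $M^8$; that cannot be the criterion, since in dimension $8$ the operator $\beta\mapsto *(\beta\wedge\Phi)$ has eigenvalues $3$ and $-1$ on $\Lambda^2_7$ and $\Lambda^2_{21}$, so no nonzero $2$-form satisfies $\beta\wedge\Phi=0$; membership in $\frak{spin}(7)$ is the eigenvalue condition \eqref{8inst}, not a vanishing condition. The correct check is immediate: for $\beta\in\frak{g}_2$ one has $\beta\wedge\Phi=e^1\wedge(\beta\wedge\Theta)+\beta\wedge *^7\Theta=e^1\wedge(\beta\wedge\Theta)$, hence $*^8(\beta\wedge\Phi)=*^7(\beta\wedge\Theta)=-\beta$ by \eqref{7inst}, so $\beta\in\frak{spin}(7)$ and the trivially extended connection is a $Spin(7)$-instanton with the same first Pontrjagin form. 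With this replacement (and the degree-dependent signs in the product Hodge-star identities, which do not affect the conclusion because the two components of $*d\Phi\wedge\Phi$ are killed respectively by $d\Theta\wedge\Theta=0$ and by $\theta^7=0$ in \eqref{c2}), your argument is complete and coincides with the paper's.
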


Next we find more compact $Spin(7)$-solutions to the supersymmetry
equations with non-zero flux and constant dilaton on non-trivial
extensions of the cocalibrated $G_2$-structures of pure type given
on the 7-dimensional generalized Heisenberg group. Moreover, we also
provide new 8-dimensional solutions to the equations of motion on
some of these non-trivial $Spin(7)$-extensions.

\bigskip

\noindent{\bf Non-trivial Spin(7) extension of the 7-dimensional
generalized Heisenberg group:} Let us consider the 8-dimensional
extension of~\eqref{h(3,1)} given by:
\begin{equation}\label{ext-h(3,1)}
\left\{
  \begin{aligned}
  &de^1 = c\,(e^{24} + e^{25} - e^{34} + e^{35}),\\
  &de^2 = de^3=de^4=de^5=de^6=de^7=0, \\
  &de^8 = a\, e^{23}+ b\, e^{45} - (a+b) e^{67}.
  \end{aligned}
\right.
\end{equation}

These equations correspond to the structure equations of an
8-dimensional nilpotent Lie algebra, which we denote by $\frh^8$.
Let us consider the $Spin(7)$-structure defined by \eqref{s1}.
A direct calculation shows that the torsion is given by
$$T=*d\Phi= c\, e^{124} +c\, e^{125} -c\, e^{134} +c\, e^{135} +
a\, e^{238} + b\, e^{458} - (a+b)\, e^{678}.$$ The torsion satisfies
$T\wedge\Phi=0$ and
\begin{equation}\label{d-torsion-8}
dT = 2(ab-2c^2) e^{2345} -2 a(a+b) e^{2367} -2 b(a+b) e^{4567}.
\end{equation}
There are some special cases for which $T$ is parallel with respect
to the torsion connection, more concretely:

\begin{lemma}\label{par-8}
$\nabla^+ T= 0$ if and only if $(a-b)c=0$.
\end{lemma}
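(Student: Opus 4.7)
The plan is a direct computation: find $\nabla^+$ explicitly and check the vanishing of $\nabla^+T$ as a polynomial condition in $a,b,c$. From the structure equations \eqref{ext-h(3,1)} the only nonzero brackets of the dual basis $\{E_1,\dots,E_8\}$ are $[E_2,E_4]=[E_2,E_5]=-[E_3,E_4]=[E_3,E_5]=-c\,E_1$ together with $[E_2,E_3]=-a\,E_8$, $[E_4,E_5]=-b\,E_8$, $[E_6,E_7]=(a+b)\,E_8$. Formula \eqref{lciv} then determines the Levi-Civita forms $(\sigma^g)^i_j$, and \eqref{pm} combined with the explicit torsion
$$T=c\,e^{124}+c\,e^{125}-c\,e^{134}+c\,e^{135}+a\,e^{238}+b\,e^{458}-(a+b)\,e^{678}$$
yields the torsion-connection forms $(\sigma^+)^i_j$.

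Because $T$ is left-invariant, the covariant derivative is purely tensorial:
$$
(\nabla^+_{E_k}T)(E_i,E_j,E_l)=-\sum_{s=1}^{8}\Bigl[(\sigma^+)^s_i(E_k)T_{sjl}+(\sigma^+)^s_j(E_k)T_{isl}+(\sigma^+)^s_l(E_k)T_{ijs}\Bigr].
$$
The problem thus reduces to checking when a finite list of trilinear expressions in $a,b,c$ vanishes. Organizing the check by how many of $i,j,l,k$ lie in the ``$c$-affected'' block $\{1,2,3,4,5\}$ versus the remaining block $\{6,7,8\}$ keeps the bookkeeping manageable: components involving only $E_6,E_7,E_8$ interact with $T$ solely through the $e^{678}$ summand and its $e^{238},e^{458}$ partners, and they vanish without any further condition on the parameters.

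The residual relations all carry a common factor $c(a-b)$, coming from the asymmetry between how $a$ pairs with $E_2,E_3$ and $b$ with $E_4,E_5$ while the $c$-terms of $de^1$ mix the two pairs. This yields both implications at once: if $c=0$ then $de^1=0$ and the situation reduces to an $S^1$-extension of the seven-dimensional setting where parallelism of the torsion was already established in Lemma~\ref{parallel-7h}; if $a=b$ then the swap $e^2\leftrightarrow e^4$, $e^3\leftrightarrow e^5$ is an isometric automorphism compatible with $\Phi$ and $T$, forcing the offending components to cancel. Conversely $c(a-b)\neq 0$ produces a nonzero component of $\nabla^+T$, say in an entry of the form $(\nabla^+_{E_2}T)(E_1,E_4,E_8)$.

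The main obstacle is bookkeeping rather than any conceptual difficulty: the $c$-terms in $de^1$ generate numerous off-diagonal entries of $(\sigma^+)^i_j$, which one must combine with the seven components of $T$. The saving grace, which one anticipates from the symmetry considerations above and then verifies in the calculation, is that every potential obstruction collapses to a scalar multiple of $c(a-b)$, so that the messy intermediate expressions reorganize themselves into a single clean condition.
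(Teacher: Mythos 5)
Your overall strategy---computing the connection forms of $\nabla^+$ from \eqref{lciv} and \eqref{pm} and testing the vanishing of $\nabla^+T$ component by component---is exactly the (unwritten) direct calculation behind the paper's statement, and the bracket relations you list are correct. But the two concrete checkpoints you offer in place of the actual computation are both wrong, and they sit precisely where the content of the lemma lies. Carrying out the calculation, every nonzero connection form of $\nabla^+$ is a multiple of $e^1$ or $e^8$: one finds
$(\sigma^+)^2_4=(\sigma^+)^2_5=-(\sigma^+)^3_4=(\sigma^+)^3_5=-c\,e^1$, $(\sigma^+)^2_3=-a\,e^8$, $(\sigma^+)^4_5=-b\,e^8$, $(\sigma^+)^6_7=(a+b)\,e^8$, and all forms with an index $1$ or $8$ vanish. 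Consequently $\nabla^+_{E_k}T=0$ for \emph{every} $k\in\{2,\dots,7\}$ and for all values of $a,b,c$; in particular your proposed witness $(\nabla^+_{E_2}T)(E_1,E_4,E_8)$ is identically zero, so the ``only if'' direction is not supported as written. The obstruction lives only in the fibre directions:
\begin{equation*}
\nabla^+_{E_8}T=c(a-b)\bigl(e^{124}-e^{125}+e^{134}+e^{135}\bigr),\qquad
\nabla^+_{E_1}T=-c(a-b)\bigl(e^{248}-e^{258}+e^{348}+e^{358}\bigr),
\end{equation*}
so a correct witness is, e.g., $(\nabla^+_{E_8}T)(E_1,E_2,E_4)=c(a-b)$, and these two displays also give the ``if'' direction at once.

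The symmetry argument you invoke for $a=b$ is also not available: the swap $e^2\leftrightarrow e^4$, $e^3\leftrightarrow e^5$ sends $e^{24}+e^{25}-e^{34}+e^{35}$ to $-e^{24}+e^{25}-e^{34}-e^{35}$, which is not $\pm(e^{24}+e^{25}-e^{34}+e^{35})$, so whatever sign you assign to $e^1$ it is not an automorphism of \eqref{ext-h(3,1)}; and even a genuine finite symmetry would only make $\nabla^+T$ invariant, not force it to vanish. Your reduction of the case $c=0$ to Lemma~\ref{parallel-7h} is fine, but for $a=b$, $c\neq0$ the vanishing must come out of the computation itself (as the displayed formulas show it does). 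So the route is the right one, but as submitted the proof has a gap in both directions that only the explicit calculation closes.
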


Using again (\ref{curvature}), (\ref{lciv}) and (\ref{pm}), the
non-zero curvature forms $(\Omega^+)^i_j$ of the torsion connection
$\nabla^+$ are given by
$$
\begin{array}{l}
(\Omega^+)^2_3= -a^2\,e^{23}-ab\,e^{45}+a(a+b)\,e^{67},\\[8pt]
(\Omega^+)^2_4= (\Omega^+)^3_5 = (a-b)c\,e^{18} - c^2\,e^{24} -
c^2\,e^{25} +
c^2\,e^{34} -c^2\, e^{35} ,\\[8pt]
(\Omega^+)^2_5= - (\Omega^+)^3_4= -(a-b)c\,e^{18}-c^2\,e^{24} -
c^2\,e^{25} +
c^2\,e^{34} -c^2\, e^{35} ,\\[8pt]
(\Omega^+)^4_5=-ab\,e^{23}-b^2\,e^{45} + b(a+b)\, e^{67} ,\\[8pt]
(\Omega^+)^6_7= a(a+b)\,e^{23}+b(a+b)\,e^{45}-(a+b)^2\,e^{67},
\end{array}
$$
which implies that the first Pontrjagin form $p_1(\nabla^+)$ is
given by
$$
\begin{array}{rl}
2\pi^2\,p_1(\nabla^+) = \!\!& \!\! (ab(a^2+ab+b^2)-4c^4)\,e^{2345} -
a(a+b)(a^2+ab+b^2)\,e^{2367} \\[8pt]
\!\!& \!\! - b(a+b)(a^2+ab+b^2)\,e^{4567}.
\end{array}
$$

Let us denote by $H^8$ the simply-connected nilpotent Lie group
corresponding to the Lie algebra~$\frh^8$. From the explicit
description of the Lie group $H(3,1)$ and from~(\ref{s1}), it
follows that the left invariant metric $g$ on $H^8$ determined by
the $Spin(7)$-structure $\Phi$ can be expressed globally as
\begin{equation}\label{g8}
\begin{array}{rl}
g= \!\!& \!\! \left( dw+ \frac{c}{b}(\frac{x_1}{a}-y_1)dx_2 +
c(\frac{x_1}{a}+y_1)dy_2 \right)^2 + (\frac1a d x_1)^2 + (d
y_1)^2 + (\frac1b d x_2)^2 \\[9pt]
\!\!& \!\!  + (d y_2)^2 + (\frac{-1}{a+b} d x_3)^2 + (d y_3)^2 +
(x_1 dy_1 + x_2 dy_2 + x_3 dy_3 - dz)^2,
\end{array}
\end{equation}
where $(w,x_1,y_1,x_2,y_2,x_3,y_3,z)$ denote the (global)
coordinates of $H^8$, and the $w$-coordinate of the left translation
$L_{(w^0,x_1^0,y_1^0,x_2^0,y_2^0,x_3^0,y_3^0,z^0)}$ by an element
$(w^0,x_1^0,y_1^0,x_2^0,y_2^0,x_3^0,y_3^0,z^0)$ of $H^8$ is given by
$$
w\circ L_{(w^0,x_1^0,y_1^0,x_2^0,y_2^0,x_3^0,y_3^0,z^0)} = w -
\frac{c}{b}(\frac{x_1^0}{a}-y_1^0) x_2 - c(\frac{x_1^0}{a}+y_1^0)
y_2 + w^0.
$$
Notice that the remaining coordinates of
$L_{(w^0,x_1^0,y_1^0,x_2^0,y_2^0,x_3^0,y_3^0,z^0)}$ come easily from
the matrix description of $H(3,1)$.

Let $\Gamma$ be a lattice of maximal rank of $H^8$ and denote by
$M^8$ the compact nilmanifold $\Gamma\backslash H^8$. Clearly, $M^8$
can be described as a circle bundle over the compact $7$-manifold
$N(3,1)$  (defined by \eqref{h(3,1)})
 $$
 S^1 \hookrightarrow M^8 \to N(3,1),
 $$
with connection $1$-form $\eta=e^1$ such that the curvature form
$d\eta = c\,(e^{24} + e^{25} - e^{34} + e^{35}) \in \frak{g}_2$.

Alternatively, the manifold $M^8$ may be viewed as the total space
of a circle bundle over the product of a $2$-torus by a $5$-manifold
$M^5$, which is also the total space of a principal circle bundle
over a $4$-torus, i.e. $S^1 \hookrightarrow M^5 \to \mathbb{T}^4$.
In fact, let $\{e^2,\ldots,e^5\}$ be a basis for the closed
$1$-forms on $\mathbb{T}^4$. Then, $M^5$ is the circle bundle over
$\mathbb{T}^4$ with connection $1$-form $\eta=e^1$ such that the
curvature form is $d\eta = c\,(e^{24} + e^{25} - e^{34} + e^{35})$.
Now, let $e^6$ and $e^7$ be a basis for the closed $1$-forms  on
$\mathbb{T}^2$. Take the product manifold $M^5 \times \mathbb{T}^2$.
Then, $M^8$ is the circle bundle over $M^5 \times \mathbb{T}^2$
 $$
 S^1 \hookrightarrow M^8 \to M^5 \times \mathbb{T}^2,
 $$
with connection form $\nu=e^8$ such that $d\nu = a\,e^{23} +
b\,e^{45} - (a+b)\,e^{67}$.

\begin{prop}\label{Spin7-inst-h}
For each $\lambda,\mu\in \mathbb{R}$, let $A_{\lambda,\mu}$ be the
linear connection on $M^8$ defined by the connection forms:
$$
\begin{array}{rl}
&(\sigma^{A_{\lambda,\mu}})^2_3 = - (\sigma^{A_{\lambda,\mu}})^3_2=
(\sigma^{A_{\lambda,\mu}})^4_5 = -
(\sigma^{A_{\lambda,\mu}})^5_4 = \lambda\, e^8,\\[6pt]
&(\sigma^{A_{\lambda,\mu}})^2_4 = (\sigma^{A_{\lambda,\mu}})^2_5
=(\sigma^{A_{\lambda,\mu}})^3_5
= (\sigma^{A_{\lambda,\mu}})^4_3= - \mu\, e^1, \\[6pt]
&(\sigma^{A_{\lambda,\mu}})^3_4 = (\sigma^{A_{\lambda,\mu}})^4_2 =
(\sigma^{A_{\lambda,\mu}})^5_2
= (\sigma^{A_{\lambda,\mu}})^5_3 =\mu\, e^1, \\[6pt]
&(\sigma^{A_{\lambda,\mu}})^6_7 = -(\sigma^{A_{\lambda,\mu}})^7_6
=-2 \lambda\, e^8,
\end{array}
$$
and $(\sigma^{A_{\lambda,\mu}})^i_j=0$ for the remaining $(i,j)$.
Then, $A_{\lambda,\mu}$ is a $Spin(7)$-instanton with respect to the
$Spin(7)$-structure~\eqref{s1} for any $a,b,c$, $A_{\lambda,\mu}$
preserves the metric, and its first Pontrjagin form is given by
$$2\pi^2\, p_1(A_{\lambda,\mu})=( 3ab\lambda^2 -4c^2\mu^2) e^{2345} - 3 a(a+b)\lambda^2\, e^{2367} -
3 b(a+b)\lambda^2\, e^{4567}.$$
\end{prop}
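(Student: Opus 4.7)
The plan is to follow the same template as the proof of Proposition~\ref{G2-inst-h}: compute the curvature forms $(\Omega^{A_{\lambda,\mu}})^i_j$ of the given connection from \eqref{curvature}, verify that they all lie in $\frak{spin}(7)$, and then evaluate the Pontrjagin 4-form termwise.

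First I would observe that the connection preserves the metric for free, since the data displayed in the statement is manifestly skew-symmetric: $(\sigma^{A_{\lambda,\mu}})^i_j=-(\sigma^{A_{\lambda,\mu}})^j_i$ for every pair $(i,j)$, and all other connection forms vanish. Next I would compute $(\Omega^{A_{\lambda,\mu}})^i_j=d(\sigma^{A_{\lambda,\mu}})^i_j+(\sigma^{A_{\lambda,\mu}})^i_k\wedge(\sigma^{A_{\lambda,\mu}})^k_j$ using the structure equations \eqref{ext-h(3,1)}. The exterior derivatives $de^1$ and $de^8$ produce the bulk of each $\Omega$, while the quadratic terms $\sigma\wedge\sigma$ are small and only mix the indices $2,3,4,5$ on which the $\mu e^1$-entries are supported. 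I expect the non-zero curvature forms to be expressible as linear combinations of the seven 2-forms
\[
 e^{23}-e^{45},\ \ e^{23}+e^{67},\ \ e^{45}+e^{67},\ \ e^{24}+e^{35},\ \ e^{24}-e^{34},\ \ e^{25}+e^{34},\ \ e^{25}-e^{35},
\]
or similar combinations adapted to~\eqref{s1}; the scalar coefficients will be polynomials in $a,b,c,\lambda,\mu$.

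To check the instanton property I would use the characterization stated in Section~2: $\beta\in\frak{spin}(7)$ exactly when $\beta\wedge\Phi=-(*\beta)$, or equivalently when $\beta\wedge\Phi$ is the Hodge dual of $-\beta$; in our setting it is simplest to use $\beta\wedge *\Phi=\beta\wedge\Phi=-*\beta$ and check directly that each $(\Omega^{A_{\lambda,\mu}})^i_j\wedge\Phi=-*(\Omega^{A_{\lambda,\mu}})^i_j$. Since $\Phi$ is given explicitly by \eqref{s1}, this reduces to checking for each of the fourteen occurring monomials $e^{pq}$ that the identity $e^{pq}\wedge\Phi+*e^{pq}=0$ or the corresponding linear dependence among the sums holds; these are the standard $\frak{spin}(7)$-relations and will be satisfied because the curvature expressions come out as explicit 2-forms built from the invariants of $\Phi$ (the same ones that appear in the computation of $dT$ and of $p_1(\nabla^+)$ above).

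Finally, for the first Pontrjagin form I would apply
\[
p_1(A_{\lambda,\mu})=\frac{1}{8\pi^2}\sum_{1\leq i<j\leq 8}(\Omega^{A_{\lambda,\mu}})^i_j\wedge(\Omega^{A_{\lambda,\mu}})^i_j,
\]
and sum the contributions. The $\lambda$-contributions will come from the pairs $(2,3),(4,5),(6,7)$ whose curvatures are proportional to $a\,e^{23}+b\,e^{45}-(a+b)\,e^{67}$ with coefficients $\lambda,\lambda,-2\lambda$, producing the $3ab\,\lambda^2$, $-3a(a+b)\lambda^2$, $-3b(a+b)\lambda^2$ terms in the stated formula after squaring and wedging; the $\mu$-contributions come from the pairs $(2,4),(2,5),(3,4),(3,5)$ and will yield the $-4c^2\mu^2\,e^{2345}$ term. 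The expected main obstacle is purely computational: keeping track of signs in the quadratic terms $\sigma^i_k\wedge\sigma^k_j$ (which mix the $e^1$ and $e^8$ factors with the flat indices) and organizing the monomials so that the $\frak{spin}(7)$ relations are visible; no conceptual difficulty is expected because everything is linear algebra dictated by \eqref{s1} and \eqref{ext-h(3,1)}.
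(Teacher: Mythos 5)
Your proposal follows essentially the same route as the paper: compute the curvature forms from \eqref{curvature} and \eqref{ext-h(3,1)} (they come out as $\lambda,\lambda,-2\lambda$ times $a\,e^{23}+b\,e^{45}-(a+b)\,e^{67}$ for the pairs $(2,3),(4,5),(6,7)$, and $\pm\mu c\,(e^{24}+e^{25}-e^{34}+e^{35})$ for $(2,4),(2,5),(3,4),(3,5)$), verify that these two $2$-forms satisfy $*(\beta\wedge\Phi)=-\beta$, and sum the wedge squares to obtain $p_1$, with your predicted $\lambda$- and $\mu$-contributions matching the stated formula. The only caveat is that the instanton condition must be checked on the full linear combinations rather than monomial-by-monomial (an individual $e^{pq}$ does not lie in $\Lambda^2_{21}$), but you effectively acknowledge this, so there is no gap.
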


\begin{proof}
The non-zero curvature forms $(\Omega^{A_{\lambda,\mu}})^i_j$ of the
connection $A_{\lambda,\mu}$ are:
$$
\begin{array}{l}
(\Omega^{A_{\lambda,\mu}})^2_3 =
(\Omega^{A_{\lambda,\mu}})^4_5=\lambda
\left( a\, e^{23}+ b\, e^{45} -(a+b) e^{67} \right),\\[8pt]
(\Omega^{A_{\lambda,\mu}})^2_4 = (\Omega^{A_{\lambda,\mu}})^2_5 = -
(\Omega^{A_{\lambda,\mu}})^3_4 = (\Omega^{A_{\lambda,\mu}})^3_5= -
\mu\, c ( e^{24} + e^{25} - e^{34} + e^{35} ),\\[8pt]
(\Omega^{A_{\lambda,\mu}})^6_7= -(\Omega^{A_{\lambda,\mu}})^2_3 -
(\Omega^{A_{\lambda,\mu}})^4_5  = -2 \lambda \left( a\, e^{23} + b\,
e^{45} -(a+b) e^{67} \right).
\end{array}
$$
Since the Lie algebra of $Spin(7)$ can be identified with the
subspace $\Lambda^2_{21}$ of 2-forms $\beta$ such that
$*(\beta\wedge\Phi)=-\beta$, and since $a\, e^{23} + b\, e^{45}
-(a+b) e^{67}$, $e^{24} + e^{25} - e^{34} + e^{35} \in
\Lambda^2_{21}$ the connection $A_{\lambda,\mu}$ is a
$Spin(7)$-instanton for any $\lambda,\mu$.
\end{proof}

\begin{thrm}\label{M8}
Let $(M^8,\Phi)$ be the compact balanced Spin(7)-nilmanifold,
$\nabla^+$ be the torsion connection  and $A_{\lambda,\mu}$ the
$Spin(7)$-instanton given in Proposition~\ref{G2-inst-h}. If
$(\lambda,\mu)\not=(0,0)$ satisfy $3\lambda^2< a^2+ab+b^2$ and
$3\lambda^2-2\mu^2=a^2+ab+b^2-2c^2$, then
$$
dT= 2\pi^2 \alpha' \, (p_1(\nabla^+)- p_1(A_{\lambda,\mu})),
$$
where $\alpha' = 2(a^2+ab+b^2-3\lambda^2)^{-1} >0$.

Therefore, the manifold $(M^8,\Phi,\nabla^+,A_{\lambda,\mu})$ is a
compact solution to the supersymmetry equations \eqref{sup1}
satisfying the anomaly cancellation \eqref{acgen}.

If $a=b$ then the manifold $(M^8,\Phi,\nabla^+,A_{\lambda,\mu})$
with $(\lambda,\mu)\not=(0,0)$ satisfying $$\lambda^2<a^2,\qquad
3\lambda^2-2\mu^2=3a^2-2c^2$$ is a compact supersymmetric solution
to the heterotic  equations of motion \eqref{mot} in dimension 8.

The Riemannian metric is locally given by \eqref{g8} with $a=b$.
\end{thrm}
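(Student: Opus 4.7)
The proof proposal is essentially a direct verification built on the explicit curvature data already assembled in this section, plus an appeal to Corollary \ref{thacp}. First, I would form the difference $p_1(\nabla^+)-p_1(A_{\lambda,\mu})$ by subtracting the two expressions displayed above for the Pontrjagin forms on $M^8$. Grouping the coefficients of the three basis $4$-forms $e^{2345}$, $e^{2367}$, $e^{4567}$ one obtains
\begin{gather*}
2\pi^2\bigl(p_1(\nabla^+)-p_1(A_{\lambda,\mu})\bigr)
= \bigl(ab(a^2+ab+b^2-3\lambda^2)-4c^4+4c^2\mu^2\bigr)e^{2345} \\
-\,a(a+b)(a^2+ab+b^2-3\lambda^2)\,e^{2367}
-\,b(a+b)(a^2+ab+b^2-3\lambda^2)\,e^{4567}.
\end{gather*}
Comparing with \eqref{d-torsion-8}, i.e.\ $dT=2(ab-2c^2)e^{2345}-2a(a+b)e^{2367}-2b(a+b)e^{4567}$, the coefficients of $e^{2367}$ and of $e^{4567}$ simultaneously force
\[
\alpha'=\frac{2}{a^2+ab+b^2-3\lambda^2},
\]
which is positive exactly under the hypothesis $3\lambda^2<a^2+ab+b^2$.

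Then I would plug this value of $\alpha'$ into the $e^{2345}$ equation. After a short simplification (dividing through by $4c^2$ when $c\neq 0$) the resulting condition reduces precisely to
\[
3\lambda^2-2\mu^2=a^2+ab+b^2-2c^2,
\]
which is the remaining hypothesis of the theorem. This gives the anomaly cancellation identity $dT=2\pi^2\alpha'\bigl(p_1(\nabla^+)-p_1(A_{\lambda,\mu})\bigr)$; combined with the fact that $\Phi$ is balanced (hence by Section 3 solves \eqref{sup1} with constant dilaton) and that $A_{\lambda,\mu}$ is a $Spin(7)$-instanton by Proposition~\ref{Spin7-inst-h} (providing the gaugino equation), this yields the first compact supersymmetric solution satisfying \eqref{acgen}.

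For the second assertion, I would specialize to $a=b$. Then $(a-b)c=0$, so Lemma~\ref{par-8} gives $\nabla^+T=0$, and Corollary~\ref{thacp} applies directly: the supersymmetry equations \eqref{sup1} together with the anomaly cancellation \eqref{acgen} taken with $R=R^+$ automatically imply the full heterotic equations of motion \eqref{mot}. The two numerical constraints then become $\lambda^2<a^2$ and $3\lambda^2-2\mu^2=3a^2-2c^2$, matching the statement. The explicit form of the metric follows by substituting $a=b$ into \eqref{g8}.

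The only genuinely non-routine ingredient is the algebraic miracle that after matching the $e^{2367}$ and $e^{4567}$ coefficients (which fix $\alpha'$), the remaining $e^{2345}$ equation factors so cleanly when $c\neq 0$; the rest is bookkeeping using Lemma~\ref{par-8}, Proposition~\ref{Spin7-inst-h} and Corollary~\ref{thacp}. I expect the main conceptual point to be the application of Corollary~\ref{thacp} in the $a=b$ case, since without $\nabla^+T=0$ the instanton identity \eqref{supmot} for $R^+$ is not guaranteed; the balanced pattern of parameters is precisely what promotes the anomaly-cancelled background to a full solution of \eqref{mot}.
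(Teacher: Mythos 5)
Your proposal is correct and follows essentially the same route as the paper: match the coefficients of $e^{2345}$, $e^{2367}$, $e^{4567}$ in $dT$ against $p_1(\nabla^+)-p_1(A_{\lambda,\mu})$ from \eqref{d-torsion-8} and Proposition~\ref{Spin7-inst-h} to fix $\alpha'=2(a^2+ab+b^2-3\lambda^2)^{-1}$ and recover the constraint $3\lambda^2-2\mu^2=a^2+ab+b^2-2c^2$, then for $a=b$ use Lemma~\ref{par-8} together with Theorem~\ref{thac} (via Corollary~\ref{thacp}) to pass to the equations of motion. Your write-up merely makes explicit the algebra the paper leaves as "follows directly," and your coefficient computations check out.
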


\begin{proof}
The proof follows directly from \eqref{d-torsion-8}, the expression
of the first Pontrjagin form of $\nabla^+$ and
Proposition~\ref{Spin7-inst-h}. The final assertion in the theorem
follows from Lemma~\ref{par-8} and Theorem~\ref{thac}.
\end{proof}

\begin{rmrk}
There are also solutions on $M^8$ to the supersymmetry equations
taking $\nabla$ as the Levi-Civita connection $\nabla^g$. For
example, if $a=b=c=1$ in~\eqref{ext-h(3,1)} then a direct
computation shows that the first Pontrjagin form of $\nabla^g$ is
given by
$$
16\pi^2\, p_1(\nabla^g)= -5\, e^{2345} -19\, e^{2367} -19\,
e^{4567}.
$$
From Proposition~\ref{Spin7-inst-h} for $a=b=c=1$ we get
$$
2\pi^2\, p_1(A_{\lambda,\mu})= (3\lambda^2 -4\mu^2) e^{2345} - 6
\lambda^2\, e^{2367} - 6\lambda^2\, e^{4567}.
$$
Since $dT=-2\, e^{2345} -4\, e^{2367} -4\, e^{4567}$, if we choose
the $Spin(7)$-instanton $A_{\lambda,\mu}$ such that $48
\lambda^2<19$ and $64 \mu^2=96 \lambda^2 -9$, then
$$
dT=2\pi^2\, \alpha' \, (p_1(\nabla^g)-p_1(A_{\lambda,\mu})),
$$
where $\alpha'=32(19-48 \lambda^2)^{-1}>0$.
\end{rmrk}

\section{Geometric models}
The structure of the examples
that we have presented  as well as constructions proposed in
\cite{GMW} suggest a more general construction. In this section we
describe  how to derive compact solutions to the system of gravitino
and dilatino Killing spinor equations (the first two equations in
\eqref{sup1}) in dimensions seven and eight starting with a solution
of these equations in low dimensions. The construction is a
$\mathbb T^k$-bundle with curvature of instanton type over a compact
low dimensional solution. The benefit of this construction is the obtained reduction of
the dilaton variables, i.e. the non-constant dilaton depend on reduced number of variables.

First we recall the dimensions 5 and 6.
\begin{itemize}
\item[{\bf D=5}] The gravitino and dilatino Killing spinor
equations in dimension 5  define a reduction of the structure group
$SO(5)$ to $SU(2)$ which is described in terms of differential forms
by Conti and Salamon in \cite{ConS} as follows: an $SU(2)$-structure
on a 5-dimensional manifold $M$ is the quadruplet
$(\eta,\omega_1,\omega_2,\omega_3)$, where $\eta$ is a $1$-form with
a dual vector field $\xi$ and $\omega_i, i=1,2,3$, are $2$-forms on
$M$ satisfying
  \begin{equation*}
  \omega_i\wedge \omega_j=\delta_{ij}v, \quad
  v\wedge\eta\not=0,
  \end{equation*}
for some $4$-form $v$, and $
  X\lrcorner \omega_1=Y\lrcorner \omega_2\Rightarrow \omega_3(X,Y)\ge
  0$, where $\lrcorner$ denotes the interior multiplication.

Let $\mathbb H=Ker\eta$. The 2-forms $\omega_i, i=1,2,3$, can be
chosen to form a basis of the $\mathbb H$-self-dual 2-forms
\cite{ConS}, i.e. $*_{\mathbb H}\omega_i=\omega_i$, where
$*_{\mathbb H}$ denotes the Hodge operator on the 4-dimensional
distribution $\mathbb H$.

Based on analysis done in \cite{FI,FI2} it is shown in \cite{FIUV2}
that

{\it The first two equations in \eqref{sup1} admit a solution  in
dimension five exactly when there exists a five dimensional manifold
$M$ endowed with an $SU(2)$-structure
$(\eta,\omega_1,\omega_2,\omega_3)$ satisfying the structure
equations:
\begin{equation}\label{solstr1}
d\omega_i=2df\wedge \omega_i, \qquad *_{\mathbb H}d\eta = - d\eta
\end{equation}
where $f$ is a smooth function which does not depend on $\xi$,
$df(\xi)=0$.

The flux $H$ is given by $H=T=\eta\wedge d\eta - 2*_4df$ and the
dilaton $\phi$ is equal to $\phi=f + cons.$

Therefore, if the dilaton is constant then the structure equations
are
\begin{equation}\label{solstr}
d\omega_i=0, \qquad *_{\mathbb H}d\eta = - d\eta
\end{equation}
and the flux $H$ is given by $H=T=\eta\wedge d\eta$.

If the $SU(2)$ structure is regular, i.e. the orbit space $N=M/\xi$
is a smooth manifold then $M$ is an $S^1$-bundle over a Calabi-Yau
4-fold (flat torus or K3 surface) with $\mathbb H$-anti-self-dual
curvature form equal to $d\eta$. The metric has the form
$$g=e^{2f}g_{cy}+\eta\otimes\eta,$$
where $g_{cy}$ is the metric on the Calabi-Yau base and $f$ is a
smooth function on the base.}

We do not know whether there exist non-regular $SU(2)$-structures (the integral curves of $\xi$
are not closed) on a compact 5-manifold.

\item[{\bf D=6}] The gravitino and dilatino Killing spinor
equations in dimension 6  define a reduction of the structure group
$SO(6)$ to $SU(3)$ which is described in terms of forms by Chiossi
and Salamon in \cite{CS} as follows:  an $SU(3)$-structure is
$(F,\Psi=\Psi^++\sqrt{-1}\Psi^-)$ with K\"ahler form $F$ and complex
volume form $\Psi$ which satisfy the compatibility relations
$$F\wedge\Psi^{\pm}=0, \qquad \Psi^+\wedge\Psi^-=\frac23F\wedge
F\wedge F.$$

The necessary and sufficient condition for the existence of
solutions to the first two equations in \eqref{sup1} in dimension 6
were derived by Strominger \cite{Str}, namely the manifold should be
complex conformally balanced manifold with non-vanishing holomorphic
volume form $\Psi$ satisfying additional condition. In terms of the
five torsion classes described in \cite{CS}, the Strominger
condition is interpreted in \cite{CCDLMZ} as follows (see \cite{II}
for a slightly different expression):
$$2F\lrcorner dF+\Psi^+\lrcorner d\Psi^+=0.$$
If the dilaton is constant then the Strominger conditions read
\begin{equation}\label{bal}
dF\wedge F=d\Psi^+=d\Psi^-=0.
\end{equation}
Examples of the latter via evolution equations were presented
recently in \cite{FTUV}.

A very promising geometric
model in dimension 6
was proposed in \cite{GP} to be
a certain ${\mathbb T}^2$-bundle over a Calabi-Yau surface (see
\cite{GP} and references therein). Starting with an
$SU(2)$-structure $(\eta,\omega_1,\omega_2,\omega_3)$ on (a regular)
5-manifold $M$ satisfying \eqref{solstr} one  considers an
$S^1$-bundle over M with curvature an exact $\mathbb
H$-anti-self-dual 2 form, $d\alpha$ and the $SU(3)$-structure
$(F,\Psi=\Psi^++\sqrt{-1}\Psi^-)$ defined by
\begin{gather}\label{su3}
F=\omega_1+\eta\wedge\alpha; \qquad \Psi^+=\omega_2\wedge\eta -
\omega_3\wedge\alpha; \qquad \Psi^-=\omega_3\wedge\eta +
\omega_2\wedge\alpha.
\end{gather}
Using \eqref{solstr} and the fact that $d\alpha$ is $\mathbb
H$-anti-self-dual it can be shown following Goldstein and Prokushkin
\cite{GP} that \eqref{bal} hold as a consequence of \eqref{su3}.
When $M$ is regular, i.e. it is an $S^1$-bundle over a Calabi-Yau
4-manifold one gets a holomorphic $\mathbb T^2$-bundle over a
Calabi-Yau surface with anti-self-dual integral curvature 2-forms
which solves the first two equations in \eqref{sup1} with constant
dilaton \cite{GP}. It also follows from considerations in \cite{GP}
that if the starting $SU(2)$-structure solves the equations with
non-constant dilaton, i.e. \eqref{solstr1} hold, then the
$SU(3)$-structure on the  circle bundle also solves the first two
Killing spinor equations with non-constant dilaton in dimension 6.
The ${\mathbb T}^2$-bundle over a K3 surface construction was used
in \cite{y1,y2,y3,y4} to produce the first compact examples in
dimension 6 solving the heterotic supersymmetry equations
\eqref{sup1} with non-zero flux and non-constant dilaton together
with the anomaly cancellation \eqref{acgen} with respect to the
Chern connection.
\end{itemize}
\subsection{$\mathbb T^3$-bundles over a Calabi-Yau surface}
The structure of the example $\Gamma/H^7$, where $H^7$ is the
nilpotent Lie group defined by $\eqref{7-example}$, is generalized
in the following

\begin{thrm}\label{cy3}
Let $\Gamma_i$, $1\leq i \leq 3$, be three closed anti-self-dual
$2$-forms on a Calabi-Yau surface $M^4$, which represent integral
cohomology classes. Denote by $\omega_1$ and by
$\omega_2+\sqrt{-1}\omega_3$ the (closed) K\"ahler form and the
holomorphic volume form on $M^4$, respectively.
Then, there is a compact 7-dimensional manifold $M^{1,1,1}$, which
is
the total space of a  ${\mathbb T}^3$-bundle over $M^4$, and it has
a $G_2$-structure
\begin{equation}\label{g2def}
\Theta=\omega_1\wedge\eta_1+\omega_2\wedge\eta_2-\omega_3\wedge\eta_3+\eta_1\wedge
\eta_2\wedge\eta_3,
\end{equation}
solving the first two Killing spinor equations in \eqref{sup1} with
constant dilaton in dimension $7$, where $\eta_i$, $1\leq i \leq 3$,
is a $1$-form on $M^{1,1,1}$ such that $d\eta_i=\Gamma_i$, $1\leq i
\leq 3$.

For any smooth function  $f$ on $M^4$,  the
$G_2$-structure on $M^{1,1,1}$ given by
\begin{equation}\label{g2deff}
\Theta_f=e^{2f}\Big[\omega_1\wedge\eta_1+\omega_2\wedge\eta_2-
\omega_3\wedge\eta_3\Big]+\eta_1\wedge\eta_2\wedge\eta_3
\end{equation}
solves the first two Killing spinor equations in \eqref{sup1} with
non-constant dilaton $\phi=2f$ (in dimension 7). The metric has the
form
$$g_f=e^{2f}g_{cy}+\eta_1\otimes\eta_1+\eta_2\otimes\eta_2+
\eta_3\otimes\eta_3.$$

\end{thrm}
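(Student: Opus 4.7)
\noindent\emph{Proof proposal.} The plan is to (i) build $M^{1,1,1}$ as a compact principal $\mathbb T^3$-bundle over $M^4$, (ii) show that $\Theta$ and $\Theta_f$ define $G_2$-structures inducing the claimed metrics, and (iii) verify the two equations in \eqref{sol7} that characterise solutions of the first two Killing spinor equations.

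For (i)--(ii): since each $\Gamma_i$ is closed and represents an integral cohomology class on the compact Calabi--Yau surface $M^4$, the Kobayashi construction produces principal $U(1)$-bundles $P_i\to M^4$ with connection whose curvature is $\Gamma_i$, and the fibre product $P_1\times_{M^4}P_2\times_{M^4}P_3$ is the desired compact principal $\mathbb T^3$-bundle $M^{1,1,1}$, equipped with connection 1-forms $\eta_1,\eta_2,\eta_3$ satisfying $d\eta_i=\pi^*\Gamma_i$. Fixing a local $g_{cy}$-orthonormal coframe $\{\epsilon^1,\dots,\epsilon^4\}$ on $M^4$ adapted to the $SU(2)$-structure, so that $\omega_1=\epsilon^{12}+\epsilon^{34}$, $\omega_2=\epsilon^{13}-\epsilon^{24}$, $\omega_3=\epsilon^{14}+\epsilon^{23}$, I would check directly that the 3-form $\Theta$ of \eqref{g2def} takes the canonical $G_2$ shape in the orthonormal coframe $\{\epsilon^i,\eta_j\}$ of $M^{1,1,1}$, and therefore defines a $G_2$-structure with induced metric $g=g_{cy}+\sum_j\eta_j\otimes\eta_j$. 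For $\Theta_f$ the analogous check in the rescaled orthonormal coframe $\{\tilde\epsilon^i=e^f\epsilon^i,\eta_j\}$ with $\tilde\omega_i:=e^{2f}\omega_i$ shows that $\Theta_f=\tilde\omega_1\wedge\eta_1+\tilde\omega_2\wedge\eta_2-\tilde\omega_3\wedge\eta_3+\eta_1\wedge\eta_2\wedge\eta_3$ is still the canonical $G_2$ 3-form, and induces precisely the warped metric $g_f$ stated in the theorem.

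For (iii) the unifying observation is the pointwise identity on the 4-dimensional $M^4$ that a self-dual form wedged with an anti-self-dual one vanishes, which in our setting reads $\omega_i\wedge\Gamma_j=0$ for all $i,j$. Combined with $d\omega_i=0$ (the Calabi--Yau hypothesis), this gives
\begin{equation*}
d\Theta=\Gamma_1\wedge\eta_2\wedge\eta_3-\Gamma_2\wedge\eta_1\wedge\eta_3+\Gamma_3\wedge\eta_1\wedge\eta_2,
\end{equation*}
so every term of $d\Theta\wedge\Theta$ contains either a factor $\omega_j\wedge\Gamma_i$ or a repeated $\eta_k$, and thus $d\Theta\wedge\Theta=0$. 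A parallel computation in the orthonormal frame yields
\begin{equation*}
*\Theta=\tfrac12\,\omega_1\wedge\omega_1+\omega_1\wedge\eta_2\wedge\eta_3+\omega_2\wedge\eta_3\wedge\eta_1-\omega_3\wedge\eta_1\wedge\eta_2,
\end{equation*}
and the same self-dual/anti-self-dual vanishing forces $d*\Theta=0$, so $\Theta$ is cocalibrated of pure type and the constant-dilaton assertion follows from \eqref{sol7}. The calculation for $\Theta_f$ is identical in the tilded variables: the relations $d\tilde\omega_i=2\,df\wedge\tilde\omega_i$, $\omega_i\wedge\Gamma_j=0$ and the vanishing of $df\wedge\omega_1\wedge\omega_1$ (a 5-form on the 4-dimensional base) give
\begin{equation*}
*\Theta_f=\tfrac12\,\tilde\omega_1\wedge\tilde\omega_1+\tilde\omega_1\wedge\eta_2\wedge\eta_3+\tilde\omega_2\wedge\eta_3\wedge\eta_1-\tilde\omega_3\wedge\eta_1\wedge\eta_2
\end{equation*}
and $d*\Theta_f=2\,df\wedge *\Theta_f$, so the Lee form is a nonzero multiple of $df$; the same cancellations again yield $d\Theta_f\wedge\Theta_f=0$, and \eqref{sol7} then identifies the dilaton with the function stated in the theorem.

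The main technical obstacle is the $*\Theta_f$ computation with respect to the warped metric $g_f$, together with the careful bookkeeping of how the rescaling $\omega_i\mapsto\tilde\omega_i=e^{2f}\omega_i$ interacts with the exterior derivative and the fibrewise curvatures $d\eta_i=\Gamma_i$; once these are set up correctly, the orthogonality $\omega_i\wedge\Gamma_j=0$ systematically kills every cross-term that would otherwise obstruct both the integrability condition $d\Theta_f\wedge\Theta_f=0$ and the conformally balanced equation $d*\Theta_f=\theta^7\wedge *\Theta_f$.
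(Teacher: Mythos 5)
Your proposal is correct and takes essentially the same route as the paper: Kobayashi's theorem supplies the $\mathbb{T}^3$-bundle (the paper builds it as an iterated tower of three circle bundles rather than a fibre product, which amounts to the same thing), and the verification of \eqref{sol7} rests on exactly the same identities --- $d\omega_i=0$, $\omega_i\wedge\Gamma_j=0$ (self-dual wedge anti-self-dual on $M^4$), and the vanishing of $df\wedge\omega_i\wedge\omega_j$ as a pulled-back $5$-form --- yielding $d\Theta_f\wedge\Theta_f=0$ and $d*\Theta_f=2df\wedge *\Theta_f$, with your expression for $*\Theta_f$ agreeing with the paper's. Your additional check that $\Theta_f$ takes the canonical $G_2$ shape in an adapted coframe, hence induces the stated metric $g_f$, is a harmless elaboration that the paper leaves implicit.
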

\begin{proof}
Since  $[\Gamma_i]$, $1\leq i \leq 3$,
define integral cohomology classes on $M^4$, the well-known result
of Kobayashi \cite{Kob} implies that there exists a circle bundle
$S^1 \hookrightarrow M^5 \to M^4$, with connection $1$-form $\eta_1$
on $M^5$ whose curvature form is $d\eta_1 = \Gamma_1$. (From now on,
we write with the same symbol the $2$-form $\Gamma_i$ on $M^4$ and
its lifting to $M^5$ via the projection $M^5 \to M^4$.) Because
$\Gamma_i$ $(i=2,3)$ defines an integral cohomology class on $M^5$,
there exists a principal circle bundle $S^1 \hookrightarrow M^6 \to
M^5$ corresponding to [$\Gamma_2$] and a connection $1$-form
$\eta_2$ on $M^6$ such that $\Gamma_2$ is the  curvature form of
$\eta_2$. Using again the result of Kobayashi, there exists a
principal circle bundle $S^1 \hookrightarrow M^{1,1,1} \to M^6$ with
connection $1$-form $\eta_3$ such that $d\eta_3 = \Gamma_3$ since
$\Gamma_3$ defines an integral cohomology class on $M^6$. The
actions of $S^1$ on each one of the manifolds $M^5$, $M^6$ and
$M^{1,1,1}$ define an action of the $3$-torus on $M^{1,1,1}$ doing
$M^{1,1,1}$ a ${\mathbb T}^3$-bundle over $M^4$.


We have to show that \eqref{g2deff} implies \eqref{sol7}. We
calculate using \eqref{g2deff} that
\begin{gather*}*\Theta_f=e^{2f}\Big[\omega_1\wedge\eta_2\wedge\eta_3+\omega_2\wedge\eta_3\wedge\eta_1-
\omega_3\wedge\eta_1\wedge\eta_2+\frac{e^{2f}}2\omega_1\wedge\omega_1\Big];\\
d\Theta_f=2df\wedge\Theta_f-2df\wedge\eta_1\wedge\eta_2\wedge\eta_3+d\eta_1\wedge\eta_2\wedge\eta_3
-\eta_1\wedge d\eta_2\wedge\eta_3+\eta_1\wedge\eta_2\wedge d\eta_3.
\end{gather*}
From the last two equalities we derive
$$d*\Theta_f=2df\wedge*\Theta_f,  \qquad d\Theta_f\wedge\Theta_f=0,$$
where we have used the equalities  $d\omega_i=0$, $\omega_i\wedge
d\eta_j=0$ $(i,j=1,2,3)$ since $d\eta_j=\Gamma_j$ are anti-self-dual
2-forms on $M^4$, and $df\wedge\omega_i\wedge\omega_i=0$  as a
$5$-form on a four-dimensional Calabi-Yau manifold.
\end{proof}

Notice that in the previous theorem, if we start with a $4$-torus,
we have essentially $3$ possibilities:

\begin{itemize}
\item[1)]
Only one of the three 2-forms $\Gamma_i$ is independent. In this
case, we get $\eqref{7-example}$ with $c_1=c_2=0$. The resulting
compact nilmanifold satisfies the equations of motion.

\item[2)]
Two of the three $2$-forms $\Gamma_i$ are independent. Then, we get
$\eqref{7-example}$ with $(c_1, c_2)\not=(0,0)$. The resulting
compact nilmanifold satisfies the supersymmetric equations but not
the equations of motion.

\item[3)]
 The three  $2$-forms $\Gamma_i$ are independent. In this case, essentially we get the
quaternionic Heisenberg nilmanifold. We did not get any instanton
satisfying the supersymmetric equations, but at least the first $2$
Killing spinor equations are satisfied as the previous theorem
asserts.
 \end{itemize}

\begin{rmrk}
Clearly the conclusions of the above theorem are valid also if we
start with a compact non-regular $M^5$ with an $SU(2)$-structure
satisfying \eqref{solstr1}.  In this case, we take
two anti-self-dual 2-forms
$\Gamma_2$ and $\Gamma_3$ on $M^5$, and we consider $M^{1,1,1}$ the
principal circle bundle over $M^6$ corresponding to $[\Gamma_3]$,
which in turn is a principal circle bundle over $M^5$ corresponding
to $[\Gamma_2]$.
Now, $M^{1,1,1}$ is  a ${\mathbb T}^2$-bundle over $M^5$, and the
$G_2$-structure defined by \eqref{g2def} solves the first two
Killing spinor equations.
\end{rmrk}

Suppose that
$M$ has a $G_2$-structure defined by a $3$-form $\Theta$. Let us
recall that a  $3$-dimensional submanifold $X$ of $M$ is called {\em
associative}, with respect to $\Theta$, if the restriction to $X$ of
$\Theta$ coincides with the Riemannian volume form on $X$ induced by
the $G_2$-metric determined by $\Theta$. (Here we do not assume that
$\Theta$ is closed.) We don't know whether $M^{1,1,1}$ has a
$G_2$-structure, defined by a $3$-form $\Theta$, such the fibers are
associative with respect to $\Theta$.

In  \cite{GP}, it is proved that certain non-trivial ${\mathbb
T}^2$-bundles $M$ over a Calabi-Yau surface have a natural complex
structure not admitting K\"ahler metric. The key of his proof is
that the fibers are complex submanifolds of $M$.  For the previous
construction of ${\mathbb T}^3$-bundles $M^{1,1,1}$ over a
Calabi-Yau surface we have

\begin{lemma}\label{associative}
In the conditions of Theorem \ref{cy3}, suppose that one of the
integral cohomology classes represented by $\Gamma_i$ is non-trivial
on $M^4$. Let $\Theta$ be a $3$-form defining a $G_2$-structure on
$M^{1,1,1}$, such that there is a fibre ${\mathbb T}^3$ which is
associative with respect to $\Theta$. Then $\Theta$ is not closed.
Therefore, the $G_2$-structure on $M^{1,1,1}$ is non-parallel.
\end{lemma}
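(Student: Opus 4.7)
The plan is to argue by contradiction, assuming $d\Theta=0$ and deriving a contradiction from the associativity of the distinguished fibre $F\cong\mathbb{T}^3$. Since any associative submanifold has $\Theta$ restricting to its induced Riemannian volume form, one gets $\int_{F}\Theta=\mathrm{Vol}(F)>0$. The construction of $M^{1,1,1}$ as a tower of principal $S^1$-bundles whose curvatures $\Gamma_1,\Gamma_2,\Gamma_3$ are pulled back from $M^4$ makes the three circle actions commute and assemble into a free principal $\mathbb{T}^3$-action whose orbits are exactly the fibres of $\pi:M^{1,1,1}\to M^4$. I would then average $\Theta$ against the Haar measure of $\mathbb{T}^3$ to obtain a $\mathbb{T}^3$-invariant closed $3$-form $\tilde\Theta$ whose fibrewise integrals coincide with those of $\Theta$; in particular $\int_{F}\tilde\Theta>0$.

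By invariance, $\tilde\Theta$ admits a unique decomposition with respect to the connection,
\[
\tilde\Theta = \pi^*f\wedge\eta_1\wedge\eta_2\wedge\eta_3 + \sum_{i<j}\pi^*\alpha_{ij}\wedge\eta_i\wedge\eta_j + \sum_i\pi^*\gamma_i\wedge\eta_i + \pi^*\delta,
\]
where $f\in C^\infty(M^4)$ and $\alpha_{ij},\gamma_i,\delta$ are forms on $M^4$ of degrees $1,2,3$ respectively. Only the leading term contributes to fibre integrals, so $f$ cannot vanish identically. I would next expand $d\tilde\Theta=0$ and sort the result by the bigrading (horizontal, vertical) induced by the connection, systematically using $d\eta_i=\pi^*\Gamma_i$. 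The $(1,3)$-component collapses to $\pi^*df\wedge\eta_1\wedge\eta_2\wedge\eta_3=0$, forcing $f$ to be a nonzero constant. Using
\[
d(\eta_1\wedge\eta_2\wedge\eta_3) = \pi^*\Gamma_1\wedge\eta_2\wedge\eta_3 - \pi^*\Gamma_2\wedge\eta_1\wedge\eta_3 + \pi^*\Gamma_3\wedge\eta_1\wedge\eta_2,
\]
the $(2,2)$-component produces the three identities $f\Gamma_1+d\alpha_{23}=0$, $-f\Gamma_2+d\alpha_{13}=0$, and $f\Gamma_3+d\alpha_{12}=0$ on $M^4$. Thus every $f\Gamma_i$ is exact and, since $f$ is a nonzero constant, each class $[\Gamma_i]$ vanishes in $H^2(M^4;\mathbb{R})$, contradicting the hypothesis. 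The non-parallelism conclusion is then immediate, because a parallel $G_2$-structure satisfies $\nabla^g\Theta=0$ and in particular $d\Theta=0$.

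The main delicacy is making the bigraded decomposition of invariant forms rigorous: one needs the $\eta_i$ to be connection $1$-forms of a genuine principal $\mathbb{T}^3$-bundle $M^{1,1,1}\to M^4$. This follows from the construction of Theorem~\ref{cy3}, since each $\Gamma_i$ is basic on the relevant intermediate total space, so the three circle actions commute and globalise to a principal $\mathbb{T}^3$-action. Once this is in place, the bigraded decomposition of invariant differential forms is standard and the remaining computation is routine.
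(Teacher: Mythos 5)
Your proof is correct, but it takes a genuinely different route from the paper. The paper argues purely topologically: since some $[\Gamma_i]\neq 0$ on $M^4$, one of the circle bundles in the tower is non-trivial, and then ``one can check'' that the homology class of the $\mathbb T^3$-fibre in $H_3(M^{1,1,1},\mathbb R)$ is trivial; if $\Theta$ were closed, the pairing $\langle[\Theta],[\mathbb T^3]\rangle$ would equal the volume of the associative fibre, a positive number, contradicting the vanishing of the fibre class. You instead assume $d\Theta=0$, average over the principal $\mathbb T^3$-action (legitimate, since the action preserves each fibre with its orientation, so closedness and the fibre integral survive averaging), decompose the invariant form with respect to an invariant principal connection, and read off from the vertical-degree-$3$ and vertical-degree-$2$ components of $d\tilde\Theta=0$ that the fibre coefficient $f$ is a nonzero constant and that each $f\Gamma_i$ is exact on $M^4$, contradicting $[\Gamma_i]\neq 0$ in $H^2(M^4;\mathbb R)$. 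In effect your computation is a hands-on fibre-integration (Gysin-type) argument that \emph{proves} the statement the paper leaves as ``one can check'': it shows directly that a closed $3$-form with nonzero fibre integral forces all three real classes $[\Gamma_i]$ to vanish, i.e.\ that the fibre is null-homologous over $\mathbb R$ once some $[\Gamma_i]\neq 0$. What the paper's route buys is brevity and conceptual transparency modulo that homological fact; what yours buys is self-containedness and an explicit identification of where the non-triviality of the \emph{real} class $[\Gamma_i]$ enters (harmless here, since $H^2$ of a Calabi--Yau surface, $\mathbb T^4$ or K3, is torsion-free, so non-trivial integral class implies non-trivial de Rham class). Your flagged delicacy — that the tower with curvatures pulled back from $M^4$ assembles into a genuine principal $\mathbb T^3$-bundle admitting an invariant connection with $d\eta_i=\pi^*\Gamma_i$ — is indeed the only structural point to verify, and it holds as you say (and is asserted in the paper's construction); averaging the $\eta_i$ if necessary makes them invariant without changing their curvatures.
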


\begin{proof}
We know that one of the circle bundles considered in the
construction of $M^{1,1,1}$ is non-trivial since one of the forms
$\Gamma_i$ defines a non-zero cohomology class on $M^4$. Then, one
can check that the homology class in $H_3(M^{1,1,1},\mathbb R)$
defined by the fibres is trivial. Therefore, if some ${\mathbb T}^3$
fibre is associative, then $\Theta$ cannot be closed. Otherwise,
there is a well-defined cohomology class [$\Theta$] in
$H^3(M^{1,1,1},\mathbb R)$ and it evaluates on [${\mathbb T}^3$] to
give a positive number, i.e. the volume of ${\mathbb T}^3$, which is
a contradiction with the triviality of [${\mathbb T}^3$] .
\end{proof}


\subsection{$\mathbb S^1$-bundles over a manifold with a balanced $SU(3)$-structure}
Next result generalizes the structure of the example $N(3,1)$
defined by $\eqref{h(3,1)}$.

\begin{thrm}\label{thsug2}
Let $M^6$ be a compact complex 6-manifold solving the first two
Killing spinor equations with constant dilaton in dimension 6, i.e.
there exists an $SU(3)$-structure $(F,\Psi^+,\Psi^-)$ satisfying
\eqref{bal}. Let $\Gamma$ be a closed integral 2-form which is an
$SU(3)$-instanton, $\Gamma\in su(3)$, i.e.
$\Gamma_{\alpha\beta}=\Gamma_{\bar\alpha\bar\beta}=\Gamma_{\alpha\bar\beta}F^{\alpha\bar\beta}=0$
in local holomorphic coordinates. Then, there is a principal circle
bundle
$\pi:M^7\longrightarrow M^6$ with a connection form $\eta$ such that
$\Gamma=d\eta$ is the curvature of $\eta$ and
the $G_2$-structure
\begin{equation}\label{g2su3}
\Theta=F\wedge\eta+\Psi^+,  \qquad *\Theta=\frac12F\wedge
F+\Psi^-\wedge\eta
\end{equation}
solves the first two Killing spinor equations in \eqref{sup1} with
constant dilaton.

\end{thrm}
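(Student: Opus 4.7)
The plan is to proceed in two clearly separated steps: first build the principal circle bundle, and then verify that the resulting $G_2$-structure is cocalibrated of pure type, which by the characterization recalled after equation~\eqref{sol7} (specialized to constant dilaton, so $\theta^7=2d\phi=0$) is exactly what is needed to satisfy the first two Killing spinor equations in dimension~$7$.

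For the construction of the bundle, I invoke the classical theorem of Kobayashi: a closed real $2$-form on a manifold is the curvature of a connection on some principal $U(1)$-bundle if and only if its de Rham class is integral. Applied to $\Gamma$, this produces a principal circle bundle $\pi:M^7\longrightarrow M^6$ with connection $1$-form $\eta$ such that $d\eta=\pi^{*}\Gamma$; compactness of $M^7$ follows from that of $M^6$. From now on I identify $F,\Psi^{\pm},\Gamma$ with their pullbacks to $M^7$.

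For the verification of the two conditions I compute directly from \eqref{g2su3}. For the cocalibrated condition,
$$d*\Theta=\tfrac12 d(F\wedge F)+d\Psi^-\wedge\eta-\Psi^-\wedge d\eta=0-0-\Psi^-\wedge\Gamma,$$
where the first term vanishes by the balanced identity $dF\wedge F=0$ of \eqref{bal}, the second by $d\Psi^-=0$, and the third by bidegree: on a complex $3$-fold, $\Gamma$ is of type $(1,1)$ while $\Psi^-$ is of type $(3,0)+(0,3)$, so every component of $\Psi^-\wedge\Gamma$ has holomorphic or antiholomorphic degree strictly greater than $3$. For the pure-type condition, $d\Psi^+=0$ gives $d\Theta=dF\wedge\eta-F\wedge\Gamma$, and
$$d\Theta\wedge\Theta=(dF\wedge\eta-F\wedge\Gamma)\wedge(F\wedge\eta+\Psi^+)$$
expands into four terms. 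The term with $\eta\wedge\eta$ vanishes trivially; the term $F\wedge\Gamma\wedge\Psi^+$ is a $7$-form on the $6$-dimensional base and is therefore zero; the term $F\wedge F\wedge\Gamma\wedge\eta$ vanishes because the $SU(3)$-instanton hypothesis includes primitivity of $\Gamma$, i.e.~$F\wedge F\wedge\Gamma=0$; and the remaining term $dF\wedge\Psi^+\wedge\eta$ vanishes again by bidegree, since $dF$ is of type $(2,1)+(1,2)$ and $\Psi^+$ is of type $(3,0)+(0,3)$, forcing every component of $dF\wedge\Psi^+$ to exceed degree $3$ in the holomorphic or antiholomorphic variables.

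I do not expect any substantive obstacle. Everything reduces to the interplay of three ingredients that are already in hand: the balanced identity $dF\wedge F=0$ together with $d\Psi^{\pm}=0$ from \eqref{bal}, and the two parts of the $SU(3)$-instanton hypothesis (type $(1,1)$ plus primitivity $\Gamma\wedge F\wedge F=0$). The only points requiring a moment of care are tracking signs when reordering $\eta$ past $3$-forms, and noting that the bidegree arguments on $M^7$ are legitimate because the horizontal distribution of $\pi$ inherits the almost complex structure of $M^6$, so the decomposition of pulled-back forms by type is preserved.
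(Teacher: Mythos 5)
Your proof is correct and takes essentially the same route as the paper's: Kobayashi's theorem produces the circle bundle, and then one checks directly that $d*\Theta=0$ and $d\Theta\wedge\Theta=0$ using \eqref{bal}, the $(1,1)$-type and primitivity of $\Gamma=d\eta\in su(3)$, and the identity $dF\wedge\Psi^+=0$ on a complex manifold. (One cosmetic slip: $d(F\wedge\eta)=dF\wedge\eta+F\wedge d\eta$ with a plus sign since $F$ has even degree, but this is immaterial because each term in your expansion vanishes separately.)
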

\begin{proof}
The exterior derivative of \eqref{g2su3}, with the help of
\eqref{bal}, yields
$$d*\Theta=\frac12d(F\wedge F) +d\Psi^-\wedge\eta - \Psi^-\wedge
d\eta=0,$$ and
$$d\Theta\wedge \Theta = F^2\wedge d\eta \wedge \eta +
(F\wedge\eta+\Psi^+)\wedge d\Psi^+ - dF\wedge\Psi^+\wedge\eta=0,$$
because of the algebraic facts $\Psi^-\wedge d\eta=0$, $F^2\wedge
d\eta=0$ since $d\eta\in su(3)$, and because $dF\wedge\Psi^+=0$ on a
complex manifold (see e.g. \cite{CS}). Hence, \eqref{sol7} hold with
$\theta^7=0$.

The existence of a principle circle bundle in the conditions above
follows again from~\cite{Kob}.
\end{proof}



\subsection{$\mathbb S^1$-bundles over a cocalibrated
$G_2$-manifold of pure type} We describe a more general situation
inspired by the structure of the example $\Gamma/H^8$ defined by
$\eqref{ext-h(3,1)}$ and by considerations in \cite{GMW}.

\begin{thrm}\label{thsp7g2}
Let $M^7$ be a compact $G_2$-manifold solving the first two
equations of \eqref{sup1} with constant dilaton in dimension 7, i.e.
there exists a $G_2$-structure $\Theta$ satisfying
$d*\Theta=d\Theta\wedge\Theta=0$. Let $f$ be a smooth function on
$M^7$, and  let $\Gamma_4$ be a closed integral 2-form on $M^7$
which is a $G_2$-instanton, $\Gamma_4\in g_2$, i.e. it satisfies
\eqref{7inst}. Then, we have
\begin{itemize}
\item[i)]
There is a principal circle bundle $\pi:M^8\longrightarrow M^7$
corresponding to $[\Gamma_4]$ and a connection $1$-form $\eta_4$ on
$M^8$ whose curvature form is $\Gamma_4$, such that the
$Spin(7)$-structure
\begin{equation}\label{spin7g2f}
\Phi_f=e^{3f}\Theta\wedge\eta_4+e^{4f}*_7\Theta,
\end{equation}
solves the first two Killing spinor equations in \eqref{sup1} with
non-constant dilaton $\phi=2f$ in dimension 8, where $*_7$ denotes
the Hodge star operator on $M^7$.
The $Spin(7)$-metric has the form
$$g_f=e^{2f}g_7+\eta_4\otimes\eta_4.$$
\item[ii)]
If $M^7$ is a circle bundle over a compact $6$-manifold
$(M^6,F,\Psi^+,\Psi^-)$ as in
Theorem~\ref{thsug2}, $f$ is a smooth function on $M^6$ and  the
form $\Gamma_4$ of the part i) is such that $\Gamma_4\in su(3)$,
 then  there is a compact
8-dimensional manifold $M^{1,1}$ with a free structure preserving
$\mathbb T^2$-action and a fibration $\pi:M^{1,1}/\mathbb T^2\cong
M^6$ with the $Spin(7)$-structure
\begin{equation}\label{spinsu}
\Phi_f=e^{3f}\Big[F\wedge\eta+\Psi^+\Big]\wedge\eta_4+e^{4f}\Big[\frac12F\wedge
F+\Psi_-\wedge\eta\Big],
\end{equation}
solving the first two Killing spinor equations in \eqref{sup1} with
non-constant dilaton $\phi=2f$ in dimension 8, where $\eta$ is the
connection $1$-form on the circle bundle over $M^6$ corresponding to
$\Gamma$. The metric has the form
$$g_f=e^{2f}(g_6+\eta\otimes\eta)+\eta_4\otimes\eta_4.$$
\end{itemize}
\end{thrm}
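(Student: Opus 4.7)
I would handle (i) first and then deduce (ii) by combining (i) with Theorem~\ref{thsug2}.

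For (i), Kobayashi's theorem applied to the integral cohomology class $[\Gamma_4]$ (exactly as in the proofs of Theorems~\ref{cy3} and~\ref{thsug2}) produces the principal $S^1$-bundle $\pi\colon M^8\to M^7$ with connection $\eta_4$ satisfying $d\eta_4=\pi^*\Gamma_4$; compactness of $M^7$ transfers to $M^8$. To see that $\Phi_f$ really is a $Spin(7)$-structure with metric $g_f$, I would reorganise it by setting $\tilde\Theta:=e^{3f}\Theta$ and $\tilde g_7:=e^{2f}g_7$ on $M^7$, observing that $\tilde{*}_7\tilde\Theta=e^{4f}*_7\Theta$ and hence
\[
\Phi_f=\tilde\Theta\wedge\eta_4+\tilde{*}_7\tilde\Theta,\qquad g_f=\tilde g_7+\eta_4\otimes\eta_4,
\]
which is the standard $Spin(7)$-extension of the $G_2$-data $(\tilde\Theta,\tilde g_7)$ along the orthogonal direction $\eta_4$.

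The technical heart is to show that $\Phi_f$ satisfies the first two Killing spinor equations in dimension $8$ with dilaton $\phi=2f$. By the conformal characterisation recalled right after equation~\eqref{tsol7}, since $(\Theta,g_7)$ is cocalibrated of pure type, the rescaled pair $(\tilde\Theta,\tilde g_7)$ satisfies \eqref{sol7} with $\tilde\theta^{\,7}=4\,df$; equivalently, $d\tilde{*}_7\tilde\Theta=4\,df\wedge\tilde{*}_7\tilde\Theta$ and $d\tilde\Theta\wedge\tilde\Theta=0$. Expanding
\[
d\Phi_f=d\tilde\Theta\wedge\eta_4-\tilde\Theta\wedge\Gamma_4+d\tilde{*}_7\tilde\Theta,
\]
I would then substitute the two instanton identities $\Gamma_4\wedge\tilde{*}_7\tilde\Theta=0$ and $\tilde\Theta\wedge\Gamma_4=-\tilde{*}_7\Gamma_4$, both of which follow from $\Gamma_4\in\mathfrak g_2$ via the two descriptions of $\Lambda^2_{14}$ recalled in the excerpt. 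To read off the Lee form I would pass to the candidate conformally balanced pair $\bar\Phi_f:=e^{-24f/7}\Phi_f$, $\bar g_f:=e^{-12f/7}g_f$ and verify $\bar{*}\,d\bar\Phi_f\wedge\bar\Phi_f=0$; by the conformally balanced characterisation of $Spin(7)$ Killing spinor solutions recalled in Section~3, this is exactly the statement that $(\Phi_f,g_f)$ solves the gravitino and dilatino equations with dilaton $\phi=2f$. The main obstacle is precisely this last check: one must see that after subtracting $\tfrac{24}{7}\,df\wedge\Phi_f$ from $d\Phi_f$ the remainder lies in the $W_1$-type irreducible $\Lambda^5_{48}$ rather than along a Lee direction, and the instanton condition on $\Gamma_4$ is exactly what pushes the $d\eta_4$-contribution into $\Lambda^5_{48}$.

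For part (ii), I would apply (i) to the $G_2$-manifold $(M^7,\Theta=F\wedge\eta+\Psi^+)$ supplied by Theorem~\ref{thsug2}, whose Hodge dual was already computed there as $*_7\Theta=\tfrac12\,F\wedge F+\Psi^-\wedge\eta$. The hypothesis $\Gamma_4\in\mathfrak{su}(3)$ together with the canonical inclusion $\mathfrak{su}(3)\subset\mathfrak g_2$ (obtained from the chain of stabilisers of $(F,\Psi^+)$ and of $\Theta$) implies that the lift of $\Gamma_4$ from $M^6$ to $M^7$ is a $G_2$-instanton, so the hypotheses of (i) are fulfilled. Substituting the explicit expressions for $\Theta$ and $*_7\Theta$ into \eqref{spin7g2f} immediately reproduces formula \eqref{spinsu}, and the metric formula follows by stacking the two circle-bundle metrics. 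The free $\mathbb T^2$-action is generated by the two structural $S^1$-actions on the successive bundles $M^{1,1}\to M^7\to M^6$, whose quotient recovers $M^6$ as stated.
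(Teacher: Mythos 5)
Your overall strategy is the same as the paper's: Kobayashi's theorem produces the circle bundle, part i) is reduced to showing that the Lee form of $\Phi_f$ is exact with $\theta^8_f=\frac{24}{7}df$ (equivalently, that the conformally rescaled structure is balanced, which via \eqref{tsol8} gives the dilaton $\phi=2f$), and part ii) follows by substituting $\Theta=F\wedge\eta+\Psi^+$, $*_7\Theta=\frac12F\wedge F+\Psi^-\wedge\eta$ from Theorem \ref{thsug2}, using $\mathfrak{su}(3)\subset\mathfrak{g}_2$ to see the lifted $\Gamma_4$ is a $G_2$-instanton, with the free $\mathbb{T}^2$-action coming from the two structural circle actions. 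Your repackaging via $\tilde\Theta=e^{3f}\Theta$, $\tilde g_7=e^{2f}g_7$, so that $\Phi_f=\tilde\Theta\wedge\eta_4+\tilde*_7\tilde\Theta$ and $(\tilde\Theta,\tilde g_7)$ satisfies \eqref{sol7} with $\tilde\theta^7=4df$, is legitimate and equivalent to the paper's bookkeeping of the explicit conformal factors.

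The genuine gap is that the decisive computation is only announced, not carried out. You reduce everything to the check $\bar*\,d\bar\Phi_f\wedge\bar\Phi_f=0$ and then write that ``one must see'' that the remainder of $d\Phi_f$ after subtracting the Lee direction lies in $\Lambda^5_{48}$, with the instanton condition handling the $d\eta_4$-term; but this verification is essentially the entire proof of i) in the paper. Concretely, one must compute $*\,d\Phi_f$ and then $*\,d\Phi_f\wedge\Phi_f$, and this uses, beyond your two identities $\Gamma_4\wedge*_7\Theta=0$ and $\Theta\wedge\Gamma_4=-*_7\Gamma_4$, the contraction identities $*_7(df\wedge\Theta)\wedge\Theta=-4*_7df$ and $*_7(df\wedge*_7\Theta)\wedge*_7\Theta=3*_7df$, together with the vanishing of the cross terms $*_7d\Theta\wedge\Theta$ (from $\theta^7=0$ via \eqref{g2li} and cocalibration) and $*_7d\Theta\wedge*_7\Theta$ (from $d\Theta\wedge\Theta=0$), and $*_7\Theta\wedge d\eta_4=0$ (the instanton condition). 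Only after all these cancellations does one obtain $*\,d\Phi_f\wedge\Phi_f=24\,e^{7f}*_7df\wedge\eta_4$, hence $\theta^8_f=\frac{24}{7}df$ and the claimed dilaton. As written, your proposal asserts the conclusion of this computation rather than proving it, so part i) is incomplete; part ii), which you treat exactly as the paper does (including the algebraic fact that a primitive $(1,1)$-form annihilates $\frac12F\wedge F+\Psi^-\wedge\eta$), is fine but rests on completing i).
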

\begin{proof}
 To prove i) first we show that the Lee form
$7\theta^8_f=-*(*d\Phi\wedge\Phi)$ is an exact 1-form. The exterior
derivative of \eqref{spin7g2f} yields
$$d\Phi_f=3e^{3f}df\wedge\Theta\wedge\eta_4 +e^{3f}d\Theta\wedge\eta_4+4e^{4f}df\wedge*_7\Theta
-e^{3f}\Theta\wedge d\eta_4.
$$
 The latter leads to
$$*d\Phi_f=-3e^{4f}*_7(df\wedge\Theta) +e^{4f}*_7d\Theta+4e^{3f}*_7(df\wedge*_7\Theta)\wedge\eta_4 +
2e^{4f}d\eta_4\wedge\eta_4,$$
where we have used the well known fact
that $*_7(\Theta\wedge d\eta_4)=-2d\eta_4$ since $d\eta\in g_2$.

Consequently, we claim
\begin{multline}\label{bas78}
*d\Phi_f\wedge\Phi_f=-3e^{7f}*_7(df\wedge\Theta)\wedge\Theta\wedge\eta_4+
4e^{7f}*_7(df\wedge*_7\Theta)\wedge*_7\Theta\wedge\eta_4+\\
3e^{8f}*_7(df\wedge\Theta)\wedge*_7\Theta+e^{7f}*_7d\Theta\wedge\Theta\wedge\eta_4
+e^{8f}*_7d\Theta\wedge*_7\Theta+2e^{8f}*_7\Theta\wedge d\eta_4\wedge\eta_4\\=
24e^{7f}*_7df\wedge\eta_4.
\end{multline}
Indeed, the second line in \eqref{bas78} gives not contribution
since the first term  vanishes because it is a general algebraic
identity valid on any $G_2$-manifold, the second term  is zero due
to the second equality in \eqref{g2li}, the third term is zero
because of the following chain of equalities
$$*_7d\Theta\wedge*_7\Theta=g(*_7d\Theta,\Theta)vol._7=g(d\Theta,*_7\Theta)vol._7=d\Theta\wedge\Theta=0$$
and the fourth term is zero because $*_7\Theta\wedge d\eta_4=0$ since $d\eta_4\in g_2$.

The terms in the first line are subject to the following well known
algebraic $G_2$-identities
$$*_7(df\wedge\Theta)\wedge\Theta=-4*_7df,\qquad
*_7(df\wedge*_7\Theta)\wedge*_7\Theta=3*_7df.$$

Hence, we obtain from \eqref{c2} and \eqref{bas78} that
$\theta^8_f=\frac{24}7df$, i.e. the Lee form is an exact form which
completes the proof of
i). The existence of the principle circle bundle $S^1
\hookrightarrow M^8 \to M^7$ in the conditions above follows from
the result of Kobayashi \cite{Kob}.

Now, let us suppose that $\Gamma$ and $\Gamma_4$ are closed integral
$2$-forms on $M^6$, such that $\Gamma$ and $\Gamma_4\in su(3)$. Let
$M^7$ be the principal circle bundle over $M^6$ corresponding to
[$\Gamma$] as in Theorem~\ref{thsug2}. Since [$\Gamma_4$] defines an
integral cohomology class on $M^7$, Kobayashi theorem implies that
there exists a principal circle bundle $S^1 \hookrightarrow M^{1,1}
\to M^7$ corresponding to [$\Gamma_4$] and a connection $1$-form
$\eta_4$ whose curvature is $\Gamma_4$. The actions of $S^1$ on each
one of the manifolds $M^7$ and $M^{1,1}$ define an action of the
$2$-torus on $M^{1,1}$ and $M^{1,1}$ can be considered a ${\mathbb
T}^2$-bundle over $M^6$. Substituting (\ref{g2su3}) in
(\ref{spin7g2f}), and using Theorem~\ref{thsug2} and the part i), we
conclude ii).

\end{proof}


\begin{rmrk}
In Theorem~\ref{thsp7g2}, if $M^7$ is a $\mathbb T^2$-bundle over a
compact non-regular $M^5$ as in Remark $6.2$, such that $M^5$ has an
$SU(2)$-structure $(\eta_1,\omega_1,\omega_2,\omega_3)$ satisfying
\eqref{solstr1}, and there exist three closed anti-self-dual 2-forms
$\Gamma_2$, $\Gamma_3$ and $\Gamma_4$ on $M^5$ representing integral
cohomology classes, then the $S^1$-bundle over $M^7$, constructed in
Theorem~\ref{thsp7g2}, is a $\mathbb T^3$-bundle over $M^5$ with
$Spin(7)$-structure
$$
\Phi_f=e^{3f}\Theta_{f}\wedge\eta_4+*_7\Theta_{f},
$$
solving the first two equations in \eqref{sup1} with non-constant
dilaton, where the $G_2$-form $\Theta_{f}$ on $M^7$ is given by
\eqref{g2deff}. The $Spin(7)$-metric is
$$g_f=e^{2f}(g_5+\eta_2\otimes\eta_2+
\eta_3\otimes\eta_3)+\eta_4\otimes\eta_4,$$ where $f$ and $g_5$
denote a smooth function and the metric on $M^5$, respectively.

Moreover, we must notice that in Theorem~\ref{thsp7g2}, if $M^7$ is
a $\mathbb T^3$-bundle over a Calabi-Yau surface as in
Theorem~\ref{cy3}, and the form $\Gamma_4$ considered in Theorem
\ref{thsp7g2} is such that $\Gamma_4\in su(2)$, i.e. anti-self-dual
2-form on $M^4$, then the $S^1$-bundle constructed in
Theorem~\ref{thsp7g2} is
a $\mathbb T^4$-bundle over the Calabi-Yau $M^4$ with a
$Spin(7)$-structure given by
$$
\Phi=\Theta_{f}\wedge\eta_4+*_7\Theta_{f},
$$
which solves the first two equations in \eqref{sup1} with
non-constan dilaton, where the $G_2$-form $\Theta_{f}$ is given by
\eqref{g2deff}. The metric is given by
$$g_f=e^{2f}g_{cy}+\eta_1\otimes\eta_1+\eta_2\otimes\eta_2+
\eta_3\otimes\eta_3+\eta_4\otimes\eta_4.$$

Suppose that one of the integral cohomology classes represented by
$\Gamma_i$ is non-trivial on $M^4$. Let $\Phi$ be a $4$-form
defining a $Spin(7)$-structure on the total space of the
$S^1$-bundle over $M^7$, such that there is a fibre ${\mathbb T}^4$
which is associative with respect to $\Phi$. Then we conclude that
$\Phi$ is not closed similarly as in the proof of Lemma
\ref{associative}. Therefore, the $Spin(7)$-structure on the total
space of the $S^1$-bundle over $M^7$ is non-parallel.
\end{rmrk}

\medskip
\noindent {\bf Acknowledgments.}  We would like to thank Alexei
Kovalev for useful discussions about associative submanifolds. This
work has been partially supported through grant MEC (Spain) MTM
2005-08757-C04-02 and under project "Ingenio Mathematica (i-MATH)"
No. CSD2006-00032 (Consolider Ð Ingenio 2010). The second author is
partially supported by the Contract 154/2008 with the University of
Sofia `St.Kl.Ohridski`. S.I. is a Senior Associate to the Abdus
Salam ICTP, Trieste and the final stage of the research was done
during his stay in the ICTP, Fall 2008.

\end{document}